\documentclass[10pt,a4paper]{amsart}
\usepackage{mathrsfs}
\usepackage{syntonly}
\usepackage{amsmath}
\usepackage{amsthm}
\usepackage{amsfonts}
\usepackage{amssymb}
\usepackage{latexsym}
\usepackage{amscd,amssymb,amsopn,amsmath,amsthm,graphics,amsfonts,mathrsfs,accents,enumerate,verbatim,calc}
\usepackage[dvips]{graphicx}
\usepackage[colorlinks=true,linkcolor=red,citecolor=blue]{hyperref}
\usepackage[all]{xy}
\usepackage{enumitem}
\usepackage{mathtools}
\usepackage{tikz}
\usetikzlibrary{decorations.pathreplacing,decorations.markings}
 \tikzset{
  on each segment/.style={
    decorate,
    decoration={
      show path construction,
      moveto code={},
      lineto code={
        \path [#1]
        (\tikzinputsegmentfirst) -- (\tikzinputsegmentlast);
      },
      curveto code={
        \path [#1] (\tikzinputsegmentfirst)
        .. controls
        (\tikzinputsegmentsupporta) and (\tikzinputsegmentsupportb)
        ..
        (\tikzinputsegmentlast);
      },
      closepath code={
        \path [#1]
        (\tikzinputsegmentfirst) -- (\tikzinputsegmentlast);
      },
    },
  },
  mid arrow/.style={postaction={decorate,decoration={
        markings,
        mark=at position 0.6 with {\arrow[#1]{stealth}} 
      }}},
}
\usetikzlibrary{arrows}
\usetikzlibrary{trees}

\usetikzlibrary{matrix}
\usetikzlibrary{patterns}
\usetikzlibrary{shadings} 
\date{}
\allowdisplaybreaks[4] \footskip=15pt
\renewcommand{\uppercasenonmath}[1]{}

\numberwithin{equation}{section} \theoremstyle{plain}
\newtheorem*{thm*}{Main Theorem}
\newtheorem{thm}{Theorem}[section]
\newtheorem{cor}[thm]{Corollary}
\newtheorem*{cor*}{Corollary}
\newtheorem{lem}[thm]{Lemma}
\newtheorem*{lem*}{Lemma}

\newtheorem*{fact*}{Fact}

\newtheorem*{nota*}{Notation}

\newtheorem*{prop*}{Proposition}
\newtheorem{rem}[thm]{Remark}
\newtheorem*{rem*}{Remark}

\newtheorem*{observation*}{Observation}
\newtheorem{exa}[thm]{Example}
\newtheorem*{exa*}{Example}
\newtheorem{df}[thm]{Definition}
\newtheorem*{df*}{Definition}

\newtheorem*{conj*}{Conjecture}

\newtheorem*{quest*}{Question}

\newtheorem*{ack*}{ACKNOWLEDGEMENTS}

\newcommand{\pf}{\noindent\begin {proof}}
\newcommand{\epf}{\end{proof}}

    \newcommand{\I}{\mathrm{Id}_{M}\oo}

\begin{document}
\begin{center}
{{\bf\large
Three results on extension dimensions of syzygy module categories
}

\vspace{0.5cm}   Pei Luo\footnote{{\color{white}\hspace{1mm}} Corresponding author.}, Zhongkui Liu}

\end{center}

$$\bf  Abstract$$
\leftskip0truemm \rightskip0truemm \noindent
This paper establishes three main results on the extension dimension of syzygy module categories:~(1)~we prove that excellent ring extensions preserve extension dimensions of syzygy module categories,~including syzygy categories of modules of finite projective dimensions;\\
(2)~for cleft extensions, we investigate the behavior of extension dimensions under natural nilpotency and projective conditions;~(3)~for commutative Artin rings, we establish local-global characterisations for the extension dimension.
\leftskip10truemm \rightskip10truemm \noindent
\\[2mm]
{\bf Keywords:} Extension dimension~$\cdot$~Excellent extension~$\cdot$~Cleft extension~$\cdot$~Local-global property.\\
{\bf 2020 Mathematics Subject Classification:} 18G20; 16E10; 13D05.

\leftskip0truemm \rightskip0truemm
\section { \bf Introduction}

The finitistic dimension conjecture~$\mathbf{(FDC)}$ is a longstanding open problem in the representation theory of Artin algebras.~It has been verified for several classes, including monomial algebras~\cite{Green1991}, algebras of the representation dimension at most three~\cite{IT2005}, and Igusa-Todorov algebras~\cite{Wei2009}.

Let $R$ be an Artin algebra,~and $R\text{-}\mathrm{mod}$ denote the category of finitely generated left $R$-modules.~The extension dimension $\mathrm{ext.dim}(R)$ of $R\text{-}\mathrm{mod}$~was introduced by Beligiannis~\cite{4},~and the upper bound of extension dimension (also known as radius)~has been studied by Dao-Takahashi~\cite{DT2014}~under some conditions on rings.~Roughly speaking, the extension dimension measures how many extensions are needed to generate the whole category from a single module.~Note from \cite{4} that $\mathrm{ext.dim}(R)=0$ implies that the representation dimension of $R$ is no more than 2, and thereby the zero extension dimension implies the finitistic dimension conjecture by \cite[Corollary~0.9]{IT2005}.~Recently,~the extension dimension~of syzygy module category, which is a high-level syzygy category,~has been studied in \cite{DT2014,DLT2025,ZTS2025}.~In a significant advance,~Zheng~et~al. \cite{17}~have used the extension dimension to reformulate the finitistic dimension conjecture as follows:
\begin{align}\label{equivelence-FDC}
\mathbf{FDC}~\text{holds~for}~R\Longleftrightarrow
\mathrm{ext.dim}(\Omega^{s}\mathcal{P}^{<\infty}(R))
\leq1~\text{for~some}~s\geq0,
\end{align}
where~$\Omega^{i}(\mathcal{P}^{<\infty}(R))$ denotes  the subcategory of $i$-th syzygies of modules of finite projective dimension.~This equivalent characterization
highlights the importance of understanding how extension dimension behaves under ring homomorphisms.~In this work,~we will take different ring changes toward studying the invariance and finiteness of extension dimension of syzygy module category.

In homological algebra, excellent ring extensions play an important role. Classical homology dimensions, such as global dimension~\cite{Liu1994},~Gorenstein dimension~\cite{11},~and finitistic dimension~\cite{Zhang2018}, satisfy some nice inequalities or equalities under excellent ring extensions.~It should be noted that Zheng et al.~\cite{17} have proved the extension dimension remains invariant under excellent extensions.~In recent years,~representation-finite properties of the syzygy module category~(also called syzygy-finite type) have sparked extensive research in representation theory~\cite{BLM2019},~which is related to the finitistic dimension conjecture. However the representation-finite properties is related to the extension dimension of module category.~Moreover,~it is worth noting that the module category is the $0$-th syzygy module category.~This naturally raises a fundamental question: Does the extension dimension of syzygy module category remain invariant under excellent extension? This paper provides a positive answer to the question, and generalizes the invariance of \cite[Theorem~4.2]{17} to the syzygy module category of any level:

\begin{thm}\label{thm1}
{\rm(Theorem~\ref{key} and~\ref{syzygy-prop-3.8})} Let~$S\geq R$~be an excellent extension of Artin algebras.~Then
\begin{align*}
&\mathrm{ext.dim}(\Omega^{j}(R\text{-}\mathrm{mod}))=
\mathrm{ext.dim}(\Omega^{j}(S\text{-}\mathrm{mod})),\\
&\mathrm{ext.dim}\Omega^{j}(\mathcal{P}^{<\infty}(R))=
\mathrm{ext.dim}\Omega^{j}(\mathcal{P}^{<\infty}(S))
\end{align*}
hold for any~integer~$j\geq0$.
\end{thm}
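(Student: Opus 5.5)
We prove both equalities by comparing the two syzygy categories through the restriction functor ${}_R(-)\colon S\text{-}\mathrm{mod}\to R\text{-}\mathrm{mod}$ and the induction functor $S\otimes_R-\colon R\text{-}\mathrm{mod}\to S\text{-}\mathrm{mod}$. Recall that $S\geq R$ excellent means: $S$ is a finite normalizing extension, $S_R$ and ${}_RS$ are free, and $S$ is $R$-projective, i.e.\ every short exact sequence of $S$-modules that splits over $R$ splits over $S$. Both functors are exact and preserve projectives, since $S$ is free over $R$ on either side. Hence they commute with syzygies up to projective summands:
\[
\Omega^j_S(S\otimes_R M)\simeq S\otimes_R\Omega^j_R(M)\oplus P,
\qquad
\Omega^j_R({}_RN)\simeq {}_R\Omega^j_S(N)\oplus Q.
\]
They also preserve finite projective dimension: $\mathrm{pd}_S(S\otimes_RM)\le\mathrm{pd}_RM$, and $\mathrm{pd}_R({}_RN)\le \mathrm{pd}_SN$. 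Consequently $S\otimes_R-$ sends $\Omega^j(R\text{-}\mathrm{mod})$ into $\Omega^j(S\text{-}\mathrm{mod})$, and ${}_R(-)$ does the reverse, both up to adding projectives. The same holds for the $\mathcal{P}^{<\infty}$ versions. Since the syzygy categories are closed under finite direct sums and summands (we take them as $\mathrm{add}$-closures, as is standard), the projective summands cause no trouble.

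Two standard splitting facts drive the comparison. First, for an $R$-module $M$ the multiplication map $S\otimes_RM\to M$ is not available in general, but the free basis gives ${}_R(S\otimes_RM)\simeq M^{n}$ when $S$ has a normalizing basis $1=s_1,\dots,s_n$. This is up to twisting by the automorphisms $s_iR=Rs_i$; the twist preserves syzygy classes, so in all cases $M$ is a direct summand of ${}_R(S\otimes_RM)$. Second, by $R$-projectivity of $S$, the multiplication epimorphism $S\otimes_RN\to N$ splits over $R$ and hence over $S$, so every $S$-module $N$ is a direct summand of $S\otimes_R{}_RN$. We would check both facts carefully against earlier results on excellent extensions, e.g.\ those used in \cite[Theorem~4.2]{17}.

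Now suppose $\mathrm{ext.dim}(\Omega^j(R\text{-}\mathrm{mod}))=d$, so $\Omega^j(R\text{-}\mathrm{mod})=[T]_{d+1}$ for some $T$ in this category. Take $N\in\Omega^j(S\text{-}\mathrm{mod})$. Then ${}_RN\in\Omega^j(R\text{-}\mathrm{mod})$, so ${}_RN\in[T]_{d+1}$. Applying the exact, additive functor $S\otimes_R-$, which sends $[T]_{k}$ into $[S\otimes_RT]_{k}$ by induction on $k$, gives $S\otimes_R{}_RN\in[S\otimes_RT]_{d+1}$. Taking the summand $N$ yields $N\in[S\otimes_RT]_{d+1}$. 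We must have $S\otimes_RT\in\Omega^j(S\text{-}\mathrm{mod})$, which holds up to a projective summand; we handle this by adding $S$ to the generator. Hence $\mathrm{ext.dim}$ over $S$ is at most $d$. The reverse inequality is symmetric: it uses restriction and the first splitting fact, with generator ${}_RT'\oplus R$. The same argument works verbatim for $\Omega^j(\mathcal{P}^{<\infty}(-))$, because both functors preserve finite projective dimension.

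The main obstacle is ensuring that the ext-dimension is computed inside the syzygy subcategory. The levels $[T]_k$ in the definition are formed by extensions whose middle terms and summands need not lie in $\Omega^j$. In particular, we need the convention of \cite{17,ZTS2025} for $\mathrm{ext.dim}$ of a subcategory, namely the infimum of $d$ with $\mathcal{C}\subseteq[T]_{d+1}$ where $T\in\mathcal{C}$ and the levels are taken in the whole module category. With this convention the argument above goes through directly, since exact functors preserve the level construction in the ambient category. The remaining work is to verify that the generator stays in the subcategory, which is handled by the syzygy-commutation isomorphisms.
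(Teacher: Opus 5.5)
Your proposal is correct and follows essentially the paper's own proof of Theorems~\ref{key} and~\ref{syzygy-prop-3.8}. As there, one inequality comes from restricting to $R$, applying $S\otimes_R-$ (which preserves levels, Lemma~\ref{lem:2.4}), and using that every $S$-module $N$ is an $S$-summand of $S\otimes_R N$ by condition (1). The other comes from applying $S\otimes_R-$, restricting to $R$, and using that ${}_RX$ is a summand of ${}_R(S\otimes_R X)$; the $\mathcal{P}^{<\infty}$ case goes through identically because both functors are exact, preserve projectives and preserve finite projective dimension.

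Two small points. In the paper's Definition~\ref{df:2.1} the generator may be any module of the ambient category, not one in the syzygy category, so your extra care about keeping the generator inside $\Omega^j$ (adding $S$ or $R$) is harmless but unnecessary. Also, you should write $\Omega^j(R\text{-}\mathrm{mod})\subseteq[T]_{d+1}$, not equality.
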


Consequently, we obtain invariance of the syzygy-finiteness and the finiteness of finitistic dimension under such homomorphisms~(see~Corollary~\ref{prop:3.6}~and~\ref{FDC-cor-3.9}).

On the other hand, the theories of cleft extension, introduced by Beligiannis \cite{Beligiannis2000}, provide a unified framework for a wide range of constructions in representation theory. Indeed, Morita context rings, trivial extensions, tensor rings, and arrow removal algebras can all be naturally realized as cleft extensions. In these structures, many homological dimensions and properties have been investigated.~For example,~Green et al. investigated the behavior of the
finitistic dimension in a cleft extension~\cite{GPS2021};~Kostas studied the generation of derived categories and Gorensteinness in a cleft extension~\cite{Kostas2026,Kostas2025};~Ma et al.~\cite{MZL2026} established some inequalities for the extension dimension of module categories under a cleft coextension.~Note from (\ref{equivelence-FDC})~that the extension dimension of syzygy module categories is related to the finitistic dimension conjecture.~Building on this, we aim to study the invariance for extension dimension of syzygy module categories under a cleft extension.

\begin{thm}\label{thm1.2}
{\rm(Theorem~\ref{thm-clft3.10})}
Let~$R$ and $S$ be Artin algebras, $(S\text{-}\mathrm{mod},R\text{-}\mathrm{mod},\mathrm{i},\mathrm{e},\mathrm{l})$~be a cleft extension,~and let $j$ be a non-negative integer.~If $\mathrm{e}$ preserves projectives,~$\mathrm{l}$ is exact~and~$\mathrm{F}^{n}=0$ for some $n\geq1$,~then
$$\mathrm{ext.dim}(\Omega^{j}(S\text{-}\mathrm{mod}))\leq
\mathrm{ext.dim}(\Omega^{j}(R\text{-}\mathrm{mod}))\leq
n\cdot\mathrm{ext.dim}(\Omega^{j}(S\text{-}\mathrm{mod}))+(n-1).$$
\end{thm}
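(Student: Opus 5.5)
We will prove the two inequalities separately. Both rest on the standard structure of a cleft extension $(S\text{-}\mathrm{mod},R\text{-}\mathrm{mod},\mathrm{i},\mathrm{e},\mathrm{l})$. Here $\mathrm{i}$ and $\mathrm{e}$ are exact, $\mathrm{e}\mathrm{i}\cong\mathrm{Id}$, and $\mathrm{l}$ is left adjoint to $\mathrm{e}$. Moreover $\mathrm{e}\mathrm{l}\cong \mathrm{Id}\oplus\mathrm{F}$, and the counit $\mathrm{l}\mathrm{e}X\to X$ is an epimorphism whose kernel we denote by $\mathrm{G}(X)$. We use throughout that if an exact functor preserves projectives, then it sends $j$-th syzygies to $j$-th syzygies: apply it to a projective resolution. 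Since $\Omega^{j}$ is only well defined up to projective summands, we will always work with $\mathrm{add}$-closures; the relevant projective summands are automatically absorbed because the generators can be enlarged by $R$ or $S$.

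\emph{First inequality.} Suppose $\mathrm{ext.dim}(\Omega^{j}(R\text{-}\mathrm{mod}))=d$, so that $\Omega^{j}(R\text{-}\mathrm{mod})\subseteq[T]_{d+1}$ for some $T$. Let $Y=\Omega^{j}_S(Z)$ with $Z\in S\text{-}\mathrm{mod}$, and put $X=\Omega^{j}_R(\mathrm{i}Z)$. The functor $\mathrm{e}$ is exact and preserves projectives, so $\mathrm{e}X$ is a $j$-th syzygy of $\mathrm{e}\mathrm{i}Z\cong Z$. By Schanuel's lemma, $Y\oplus P\cong \mathrm{e}X\oplus Q$ for some projective $S$-modules $P,Q$. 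Since $\mathrm{e}$ is exact and additive, it carries $[T]_{d+1}$ into $[\mathrm{e}T]_{d+1}$. Hence $Y\in[\mathrm{e}T\oplus S]_{d+1}$. Moreover $\mathrm{e}T\oplus S\in\Omega^{j}(S\text{-}\mathrm{mod})$, because $T$ may be taken in $\Omega^{j}(R\text{-}\mathrm{mod})$. This gives $\mathrm{ext.dim}(\Omega^{j}(S\text{-}\mathrm{mod}))\le d$.

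\emph{Second inequality.} We first record three properties of $\mathrm{l}$ and $\mathrm{G}$.
\begin{enumerate}[label=(\roman*)]
\item $\mathrm{l}$ preserves projectives, being left adjoint to the exact functor $\mathrm{e}$. Since $\mathrm{l}$ is exact by hypothesis, $\mathrm{l}$ maps $\Omega^{j}(S\text{-}\mathrm{mod})$ into $\Omega^{j}(R\text{-}\mathrm{mod})$.
\item $\mathrm{G}$ is exact. This follows by applying the snake lemma to the counit sequences $0\to\mathrm{G}X\to\mathrm{l}\mathrm{e}X\to X\to0$, using that $\mathrm{l}\mathrm{e}$ is exact.
\item $\mathrm{G}$ preserves projectives. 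For a projective $S$-module $Q$ we have $\mathrm{l}\mathrm{e}\mathrm{l}Q\cong\mathrm{l}Q\oplus\mathrm{l}\mathrm{F}Q$, and hence $\mathrm{G}\mathrm{l}Q\cong\mathrm{l}\mathrm{F}Q$. Here $\mathrm{F}Q$ is projective, since it is a summand of $\mathrm{e}\mathrm{l}Q$ and $\mathrm{e}$, $\mathrm{l}$ both preserve projectives. Every projective $R$-module is a summand of some $\mathrm{l}Q$, so $\mathrm{G}$ preserves projectives.
\end{enumerate}
Nilpotency of $\mathrm{F}$ transfers to $\mathrm{G}$, giving $\mathrm{G}^{n}=0$ (this is the standard relation $\mathrm{G}^{k}\mathrm{l}\cong\mathrm{l}\mathrm{F}^{k}$, which we take from the cleft-extension literature).

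Now let $X\in\Omega^{j}(R\text{-}\mathrm{mod})$. By (ii) and (iii), each $\mathrm{G}^{k}X$ lies in $\Omega^{j}(R\text{-}\mathrm{mod})$ up to projective summands, so each $\mathrm{e}\mathrm{G}^{k}X$ lies in $\Omega^{j}(S\text{-}\mathrm{mod})$ up to projective summands. Splicing the exact sequences
\[
0\to\mathrm{G}^{k+1}X\to\mathrm{l}\mathrm{e}\mathrm{G}^{k}X\to\mathrm{G}^{k}X\to0,\qquad k=0,\dots,n-1,
\]
and using $\mathrm{G}^{n}X=0$, we get
\[
X\in[\mathrm{l}\mathrm{e}X]\ast[\mathrm{l}\mathrm{e}\mathrm{G}X]\ast\cdots\ast[\mathrm{l}\mathrm{e}\mathrm{G}^{n-1}X].
\]
Suppose $\Omega^{j}(S\text{-}\mathrm{mod})\subseteq[U]_{d+1}$ with $U\in\Omega^{j}(S\text{-}\mathrm{mod})$. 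Exactness of $\mathrm{l}$ places each factor $\mathrm{l}\mathrm{e}\mathrm{G}^{k}X$ in $[\mathrm{l}U\oplus R]_{d+1}$, and the generator $\mathrm{l}U\oplus R$ lies in $\Omega^{j}(R\text{-}\mathrm{mod})$ by (i). Therefore $X\in[\mathrm{l}U\oplus R]_{n(d+1)}$. This yields $\mathrm{ext.dim}(\Omega^{j}(R\text{-}\mathrm{mod}))\le n(d+1)-1=nd+(n-1)$.

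The main obstacle is the second inequality, and specifically establishing that $\mathrm{G}$ is exact, preserves projectives, and is nilpotent. These are the properties that keep every filtration factor $\mathrm{l}\mathrm{e}\mathrm{G}^{k}X$ inside syzygy categories, so that the bound $[\mathrm{l}U\oplus R]_{d+1}$ can be applied to each of them; without them the splicing argument would not stay within $\Omega^{j}$.
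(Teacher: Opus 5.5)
Your first inequality is correct and takes a different route from the paper. You realise $Y$, up to projective summands (Schanuel), as $\mathrm{e}$ of a $j$-th syzygy of $\mathrm{i}Z$. The paper instead writes $Y$ as a direct summand of $\mathrm{el}(Y)$ with $\mathrm{l}(Y)\in\Omega^{j}(R\text{-}\mathrm{mod})$. Your version does not need $\mathrm{l}$ to be exact. Your remarks that $T$ and $U$ can be chosen inside the syzygy categories are not justified, but they are also not needed, since Definition~\ref{df:2.1} allows an arbitrary generator.

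The second inequality, however, fails at the splicing step. In $0\to\mathrm{G}^{k+1}X\to\mathrm{le}\mathrm{G}^{k}X\to\mathrm{G}^{k}X\to0$ the module $\mathrm{le}\mathrm{G}^{k}X$ is the \emph{middle} term, so these sequences only give $\mathrm{le}\mathrm{G}^{k}X\in[\mathrm{G}^{k+1}X]\ast[\mathrm{G}^{k}X]$. Spliced together they form a resolution $0\to\mathrm{le}\mathrm{G}^{n-1}X\to\cdots\to\mathrm{le}X\to X\to0$. This exhibits $X$ as a cokernel, not as an iterated extension of the $\mathrm{le}\mathrm{G}^{k}X$.

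Your conclusion $X\in[\mathrm{l}U\oplus R]_{n(d+1)}$ is in fact false. Let $R$ be the path algebra of $1\to 2$ over a field $k$, and $S=k\times k$. Take $\mathrm{e}$ to be restriction, $\mathrm{i}$ inflation along $R\to R/\mathrm{rad}(R)\cong S$, and $\mathrm{l}=R\otimes_{S}-$. Then $\mathrm{e}$ preserves projectives, $\mathrm{l}$ is exact, and $\mathrm{F}=\mathrm{rad}(R)\otimes_{S}-$ satisfies $\mathrm{F}^{2}=0$. Taking $j=0$, $U=S$, $d=0$ gives $\mathrm{l}U\oplus R\cong R\oplus R$. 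But $[R\oplus R]_{2}=\mathrm{proj}(R)$ misses the simple injective module $E_{1}$ at vertex $1$. For $X=E_{1}$ your spliced resolution is just $0\to P_{2}\to P_{1}\to E_{1}\to0$, and $E_{1}\notin[P_{1}]\ast[P_{2}]$.

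The missing idea is a cosyzygy shift. Given $0\to K\to B\to C\to0$, push out along an injective envelope $K\to I$. This yields $0\to B\to C\oplus I\to\Omega^{-1}K\to0$, so $C\in[B]_{1}\bullet[\Omega^{-1}K]_{1}$. Iterating this along the resolution, moving cosyzygies through the short exact sequences by the dual horseshoe lemma, is Lemma~\ref{lem-Cosyzygy3.11}. It gives $X\in[\mathrm{le}X]_{1}\bullet[\Omega^{-1}(\mathrm{le}\mathrm{G}X)]_{1}\bullet\cdots\bullet[\Omega^{-(n-1)}(\mathrm{le}\mathrm{G}^{n-1}X)]_{1}$.

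Your check that each $\mathrm{le}\mathrm{G}^{t}X$ (the paper's $\mathrm{l}\mathrm{F}^{t}\mathrm{e}(X)$) lies in $[\mathrm{l}U]_{d+1}$ is fine. The horseshoe lemma for cosyzygies (Lemma~\ref{lem-clft4.9}) then places $\Omega^{-t}(\mathrm{le}\mathrm{G}^{t}X)$ in $[\Omega^{-t}(\mathrm{l}U)\oplus\mathrm{D}(R)]_{d+1}$. Hence $X\in[\bigoplus_{t=0}^{n-1}\Omega^{-t}(\mathrm{l}U)\oplus\mathrm{D}(R)]_{n(d+1)}$, which is exactly the paper's argument.

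So the generator must contain cosyzygies of $\mathrm{l}U$ and the injective cogenerator, and in general it does not lie in $\Omega^{j}(R\text{-}\mathrm{mod})$. That is harmless for the definition used here. No generator built only from $\mathrm{l}U$ and projective modules can work in general: in the example above, $\mathrm{l}U$ is projective for every $U$.
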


Beyond ring transformations, local-global principles form another important aspect in commutative algebra.~Many classic homological dimensions, such as projective dimension and global dimension, are known to satisfy the local-global principle.~It is therefore natural to ask whether the extension dimension also admits a local-global characterization.

\begin{thm}\label{thm1.3}
{\rm(Theorem~\ref{th:4.2})}
Let $j$ and $n$ be non-negative integers.~If $R$ is a commutative Artin ring,~then the following statements are equivalent:\\
$(1)$~$\mathrm{ext.dim}(\Omega^{j}(R\text{-}\mathrm{mod}))\leq n$;\\
$(2)$~$\mathrm{ext.dim}(\Omega^{j}(R_{\mathfrak{p}}\text{-}\mathrm{mod}))\leq n$ for every $\mathfrak{p}\in\mathrm{Spec}(R)$;\\
$(3)$~$\mathrm{ext.dim}(\Omega^{j}(R_{\mathfrak{m}}\text{-}\mathrm{mod}))\leq n$ for every $\mathfrak{m}\in\mathrm{Max}(R)$.
\end{thm}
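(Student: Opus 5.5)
The plan rests on the structure theorem for commutative Artin rings: $R\cong\prod_{i=1}^{t}R_{\mathfrak{m}_i}$, where $\mathfrak{m}_1,\dots,\mathfrak{m}_t$ are the finitely many maximal ideals. Every prime of an Artin ring is maximal, so $\mathrm{Spec}(R)=\mathrm{Max}(R)$. This gives (2)$\Leftrightarrow$(3) at once, and it suffices to prove (1)$\Leftrightarrow$(3). Through the decomposition, $R\text{-}\mathrm{mod}\simeq\prod_i R_{\mathfrak{m}_i}\text{-}\mathrm{mod}$: every $M$ decomposes as $\bigoplus_i M_{\mathfrak{m}_i}$ (with $M_{\mathfrak{m}_i}=e_iM$ for the corresponding idempotents $e_i$), and projectives, syzygies and short exact sequences all decompose componentwise. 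In particular $\Omega^{j}_R(M)\cong\bigoplus_i\Omega^{j}_{R_{\mathfrak{m}_i}}(M_{\mathfrak{m}_i})$ up to projective summands. Hence $\Omega^{j}(R\text{-}\mathrm{mod})$ corresponds, under this equivalence, to the product of the categories $\Omega^{j}(R_{\mathfrak{m}_i}\text{-}\mathrm{mod})$, all closed under finite sums and summands.

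For (3)$\Rightarrow$(1), choose for each $i$ a module $T_i\in R_{\mathfrak{m}_i}\text{-}\mathrm{mod}$ such that $\Omega^{j}(R_{\mathfrak{m}_i}\text{-}\mathrm{mod})\subseteq[T_i]_{n+1}$, and regard each $T_i$ as an $R$-module via the projection $R\to R_{\mathfrak{m}_i}$. Set $T=\bigoplus_i T_i$. Take $X\in\Omega^{j}(R\text{-}\mathrm{mod})$. Each component $e_iX$ lies in $\Omega^{j}(R_{\mathfrak{m}_i}\text{-}\mathrm{mod})$, hence in $[T_i]_{n+1}\subseteq[T]_{n+1}$, where the latter inclusion is computed in $R\text{-}\mathrm{mod}$; this works because exact sequences of $R_{\mathfrak{m}_i}$-modules are exact sequences of $R$-modules. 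Since $[T]_{n+1}$ is closed under finite direct sums, $X=\bigoplus e_iX\in[T]_{n+1}$. By induction on $k$, one checks that $[T_1]_k\oplus\cdots\oplus[T_t]_k\subseteq[T]_k$, using that a direct sum of extensions is again an extension.

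For (1)$\Rightarrow$(3), take $T$ with $\Omega^{j}(R\text{-}\mathrm{mod})\subseteq[T]_{n+1}$. Localisation at $\mathfrak{m}_i$ (equivalently, multiplication by $e_i$) is an exact functor $R\text{-}\mathrm{mod}\to R_{\mathfrak{m}_i}\text{-}\mathrm{mod}$ that commutes with sums and summands, so by induction $e_i[T]_k\subseteq[e_iT]_k$. Now let $Y\in\Omega^{j}(R_{\mathfrak{m}_i}\text{-}\mathrm{mod})$, say $Y=\Omega^{j}_{R_{\mathfrak{m}_i}}(N)$. Viewed as an $R$-module, $N$ satisfies $Y\oplus(\text{proj})\cong\Omega^{j}_R(N)$ up to projective summands, because $R_{\mathfrak{m}_i}=e_iR$ is $R$-projective. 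Hence $Y\in[T]_{n+1}$ as an $R$-module (projectives and summands are harmless, assuming the paper's convention that $[-]_k$ is closed under summands and, if needed, after replacing $T$ by $T\oplus R$). Applying $e_i$ then gives $Y=e_iY\in[e_iT]_{n+1}$.

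The main obstacle is purely bookkeeping: the syzygy category must be taken up to projective summands, and the functors used must be shown to respect the inductive definition of $[T]_k$ (closure under sums, summands and extensions). The cleanest way to handle both points is a single lemma: an exact additive functor $F$ satisfies $F([T]_k)\subseteq[F(T)]_k$.
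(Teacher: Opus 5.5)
Your proposal is correct and is essentially the paper's argument. Both directions rest on three facts: $R\cong\prod_i R_{\mathfrak{m}_i}$; localization (multiplication by $e_i$) is exact, so Lemma~\ref{lem:2.4} applies; and $R_{\mathfrak{m}}$ is $R$-projective, so a $j$-th syzygy over $R_{\mathfrak{m}}$ is a $j$-th syzygy over $R$ (Lemma~\ref{lem3.1}(2)). For (3)$\Rightarrow$(1) you glue the components directly, via restriction of scalars and closure of $[T]_{n+1}$ under finite direct sums, whereas the paper routes the same gluing through its local-global Lemma~\ref{lem-local-4.1} for membership in $[M]_n$.

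Your hedges about projective summands and about replacing $T$ by $T\oplus R$ are unnecessary, since $Y$ lies in $\Omega^{j}(R\text{-}\mathrm{mod})$ on the nose. The aside that the syzygy categories are closed under direct summands is never used, and it would need justification for $j\geq 2$.
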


The paper is organized as follows. Section 2 recalls basic notations and definitions. Section 3 proves the invariance of extension dimensions under excellent extensions. Section 4 presents Theorem \ref{thm1.2} and discusses application.~Section 5 establishes the local-global properties of extension dimensions.

\section {\bf Preliminaries}
\subsection{Conventions}
Throughout this paper,
all rings are Artin $k$-algebras with unit, where~$k$ is an arbitrary but fixed commutative Artin ring,~all modules are finitely generated left modules.~Let $R$ be an Artin algebra, $R\text{-}\mathrm{mod}$~denote the category of finitely generated left $R$-modules,~and assume all subcategories of~$R\text{-}\mathrm{mod}$~are full, additive and closed under isomorphisms.~Let~$\mathrm{proj}(R)$~and~$\mathrm{inj}(R)$
denote the subcategory of~finitely generated projective and injective left $R$-modules,~respectively.~The set of non-negative integers is denoted by $\mathbb{N}$.~In particular,~if $R$ is a commutative Artin ring,~then we denote the set of prime ideals~and~maximal ideals of $R$ by~$\mathrm{Spec}(R)$~and~$\mathrm{Max}(R)$,~ respectively.

\subsection{Extension dimension}\label{Section2.2}
Given a subcategory $\mathcal{U}$~of $R\text{-}\mathrm{mod}$,~denote by $\mathrm{add}(\mathcal{U})$ the smallest full subcategory of $R\text{-}\mathrm{mod}$ containing $\mathcal{U}$ and is closed under finite direct sums and direct summands.~Let~$[\mathcal{U}]_{0}$ denote the full subcategory consisting of objects isomorphic to the zero module, and~$[\mathcal{U}]_{1}:=\mathrm{add}(\mathcal{U}).$~For $n\geq2$,~it can be inductively defined as
$$[\mathcal{U}]_{n}:=[\mathcal{U}]_{n-1}\bullet[\mathcal{U}]_{1}
=\mathrm{add}([\mathcal{U}]_{n-1}\ast[\mathcal{U}]_{1}),$$
where $\ast$ denotes the operation of taking extensions between subcategories.
When $\mathcal{U}=\{M\}$ for some $M\in R\text{-}\mathrm{mod}$, we write $[M]_{n}:=[\mathcal{U}]_{n}$ for simplicity.

\begin{df}\label{df:2.1}~{\rm The $\mathbf{extension~dimension}$~(\cite{DT2014}) of $\mathcal{U}$ is defined as
\begin{align*}
\mathrm{ext.dim}(\mathcal{U})&:=\mathrm{inf}\{n\geq0\mid \mathcal{U}\subseteq[N]_{n+1}~\text{for some}~N\in R\text{-}\mathrm{mod} \},
\end{align*}
or $\infty$~if no such integer exists.~In the case~$\mathcal{U}=R\text{-}\mathrm{mod}$,~we simply write~$\mathrm{ext.dim}(R):=\mathrm{ext.dim}(R\text{-}\mathrm{mod})$.}
\end{df}

\begin{exa}\label{exa:2.2}{\rm
(\cite[Example~1.6]{4})\\
$(1)$~$\mathrm{ext.dim}(R)\leq \ell\ell(R)-1$,~where~$\ell\ell(R)$ denotes the Loewy length of $R$.\\
$(2)$~$\mathrm{ext.dim}(R)=0$ if and only if $R$ is representation finite. }
\end{exa}

For a functor $H:R\text{-}\mathrm{mod}\rightarrow S\text{-}\mathrm{mod}$ and a subcategory $\mathcal{X}$ of $R\text{-}\mathrm{mod}$, define $H(\mathcal{X}):=\{H(X)\mid X\in\mathcal{X}\}$.

\begin{lem}\label{lem:2.4}
{\rm(\cite[Lemma~2.4]{17})} Let $H:R\text{-}\mathrm{mod}\rightarrow S\text{-}\mathrm{mod}$ be an exact functor,~and let~$\mathcal{X}$ be a subcategory of $R\text{-}\mathrm{mod}$.~Then
$H([\mathcal{X}]_{n})
\subseteq [H(\mathcal{X})]_{n}$~holds for any~$n\in\mathbb{N}$.
\end{lem}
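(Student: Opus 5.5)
The plan is to argue by induction on $n$, following the recursive definition $[\mathcal{X}]_{n}=\mathrm{add}([\mathcal{X}]_{n-1}\ast[\mathcal{X}]_{1})$. Two facts about an exact (hence additive) functor $H$ will be used throughout:
\begin{itemize}
\item $H$ preserves finite direct sums and direct summands, since a split idempotent on $X$ is sent to a split idempotent on $H(X)$;
\item $H$ sends short exact sequences to short exact sequences.
\end{itemize}

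The base cases are straightforward. For $n=0$, $[\mathcal{X}]_{0}$ consists of zero modules, and $H(0)\cong 0$, so $H([\mathcal{X}]_{0})\subseteq[H(\mathcal{X})]_{0}$. For $n=1$, take $Y\in\mathrm{add}(\mathcal{X})$. Then $Y\oplus Y'\cong X_1\oplus\cdots\oplus X_m$ for some $X_i\in\mathcal{X}$ and some module $Y'$. Applying $H$ gives $H(Y)\oplus H(Y')\cong\bigoplus_i H(X_i)$. Hence $H(Y)$ is a direct summand of a finite direct sum of objects of $H(\mathcal{X})$, which means $H(Y)\in\mathrm{add}(H(\mathcal{X}))=[H(\mathcal{X})]_1$.

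For the inductive step, assume $n\geq2$ and that the inclusion holds for $n-1$ (and for $1$). We first handle an object $Z\in[\mathcal{X}]_{n-1}\ast[\mathcal{X}]_1$. Such a $Z$ fits into an exact sequence $0\to A\to Z\to B\to0$, with one end in $[\mathcal{X}]_{n-1}$ and the other in $[\mathcal{X}]_1$, according to the paper's convention for $\ast$. Exactness of $H$ gives the exact sequence $0\to H(A)\to H(Z)\to H(B)\to 0$. By induction the two end terms lie in $[H(\mathcal{X})]_{n-1}$ and $[H(\mathcal{X})]_1$ respectively, so $H(Z)\in[H(\mathcal{X})]_{n-1}\ast[H(\mathcal{X})]_1$.

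It remains to pass to $\mathrm{add}$. Suppose $Y$ is a direct summand of $Z_1\oplus\cdots\oplus Z_m$ with each $Z_i$ in the extension class. Then $H(Y)$ is a direct summand of $\bigoplus_i H(Z_i)$, so $H(Y)\in\mathrm{add}([H(\mathcal{X})]_{n-1}\ast[H(\mathcal{X})]_1)=[H(\mathcal{X})]_n$.

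The only real point to check is that we may apply $\mathrm{add}$ to a finite direct sum of the $Z_i$ rather than to a single object. This works because $\mathrm{add}$ closes under finite sums, and $H$ commutes with finite sums. No step is a genuine obstacle; the argument is a routine compatibility check of an exact functor with $\mathrm{add}$ and $\ast$.
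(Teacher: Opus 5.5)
Your proof is correct. You argue by induction on $n$, using additivity of $H$ (automatic for an exact functor) to pass through $\mathrm{add}$, and exactness of $H$ to carry the defining short exact sequences of $[\mathcal{X}]_{n-1}\ast[\mathcal{X}]_{1}$ to sequences of the same shape. The paper gives no proof of its own and simply quotes the lemma as Lemma~2.4 of its reference [17]; your induction is the standard argument for that fact.
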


\subsection{(Co-)Syzygy module}
Let $n$ be a positive integer,~$M\in R\text{-}\mathrm{mod}$. Recall from \cite[Definition~2.15]{ZTS2025} that an $R$-module~$K$ is called $n$-th syzygy module of $M$, if there is an exact sequence
$$\cdots\rightarrow P_{n}\rightarrow P_{n-1}\rightarrow\cdots\rightarrow P_{1}\rightarrow P_{0}\rightarrow M\rightarrow0$$
in $R\text{-}\mathrm{mod}$ such that each $P_{i}$ is projective for $i\geq0$, and~$K\cong\mathrm{Im}(P_{n}\rightarrow P_{n-1})$. The $n$-th cosyzygy module can be defined dually.~In this paper,~we denote~$\Omega^{n}_{R}(M)$ and $\Omega^{-n}_{R}(M)$ the $n$-th syzygy module and $n$-th cosyzygy module of $M$, respectively.~By Schanuel's Lemma,~the (co-)~syzygy modules are
independent of the choice of the projective~(resp.~injective) resolution of $M$ up to projective (resp.~injective)~summands.~Moreover,~we~denote by $\Omega^{n}(R\text{-}\mathrm{mod})$ the full subcategory of $R\text{-}\mathrm{mod}$ consisting of all $n$-syzygy~modules. Clearly, $\Omega^{0}(R\text{-}\mathrm{mod})=R\text{-}\mathrm{mod}$.~For a subcategory $\mathcal{X}$ of $R\text{-}\mathrm{mod}$,~set~$\Omega^{n}(\mathcal{X}):=\{\Omega^{n}(X)\mid X\in\mathcal{X}\}$.

\subsection{restriction of scalars}\label{Section2.5}

Let $f:R\rightarrow S$ be a homomorphism of rings.~Every left~$S$-module $M$ acquires a left~$R$-module structure via restriction of scalars:~for~$m\in M$ and $r\in
R$,~set~$r\cdot m:=f(r)\cdot m$.~Since $S$ is a finitely generated~left $R$-module, the corresponding restriction functor transforms all finitely generated left $S$-modules into finitely generated left $R$-modules,~which is denoted by
$f_{*}:S\text{-}\mathrm{mod}\rightarrow R\text{-}\mathrm{mod}$.~By \cite[Example 1.1.47]{21}, every exact sequence in $S\text{-}\mathrm{mod}$ remains exact in $R\text{-}\mathrm{mod}$ after applying $f_{*}$.

\section{Extension dimensions under excellent extensions} 
Let $R$ be a subring of a ring $S$ such that $R$ and $S$ have the same unit. Then $S$ is called a ring extension of $R$ and denoted by $S\geq R$.

\begin{df}{\rm(\cite[Definition~2.1]{11})
A ring extension $S\geq R$ is called an \textbf{excellent~extension}~if the following conditions hold:\\
$(1)$~For a submodule $_{S}N$ of $_{S}M$, if $_{R}N$ is a direct summand of $_{R}M$,~then $_{S}N$ is a direct summand of $_{S}M$;\\
$(2)$~There exists a finite set~$\{s_{1},\cdots,s_{n}\}$ in $S$ such that~$S=\sum^{n}_{j=1}
Rs_{j}$;\\
$(3)$~$_{R}S$ and $S_{R}$ are free with a common basis~$\{s_{1},\cdots,s_{n}\}$ such that~$Rs_{j}=s_{j}R$ for $1\leq j\leq n$.}
\end{df}

\begin{lem}\label{lem3.1}
Let~$f:R\rightarrow S$~be a ring homomorphism~such that~$S\otimes_{R}-$ is an exact functor,~and let~$n\in\mathbb{N}$.~Then\\
$(1)$~$S\otimes_{R}\Omega^{n}(\mathcal{X})\subseteq\Omega^{n}(S\otimes_{R}\mathcal{X})$ holds for any subcategory~$\mathcal{X}$ of $R\text{-}\mathrm{mod}$;\\
$(2)$~If $_{R}S$ is a projective module, then $_{R}Y\in\Omega^{n}(R\text{-}\mathrm{mod})$ for any $Y\in\Omega^{n}(S\text{-}\mathrm{mod})$.
\end{lem}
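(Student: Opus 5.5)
For part (1), I start from a projective resolution of $X$ and apply $S\otimes_R-$. Let $X\in\mathcal{X}$, and fix an exact sequence
$$\cdots\rightarrow P_{n}\rightarrow P_{n-1}\rightarrow\cdots\rightarrow P_{0}\rightarrow X\rightarrow0$$
with each $P_i\in\mathrm{proj}(R)$ and $K\cong\mathrm{Im}(P_n\rightarrow P_{n-1})$, so that $K$ is the given $n$-th syzygy. Applying $S\otimes_R-$, which is exact by hypothesis, gives an exact sequence of $S$-modules
$$\cdots\rightarrow S\otimes_RP_{n}\rightarrow\cdots\rightarrow S\otimes_RP_{0}\rightarrow S\otimes_RX\rightarrow0.$$
Each $S\otimes_R P_i$ is a finitely generated projective $S$-module, because $S\otimes_R R\cong S$ and $S\otimes_R-$ commutes with finite direct sums and direct summands. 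Exactness also preserves images, so
$$\mathrm{Im}(S\otimes P_n\rightarrow S\otimes P_{n-1})\cong S\otimes_R\mathrm{Im}(P_n\rightarrow P_{n-1})\cong S\otimes_RK.$$
Hence $S\otimes_RK$ is an $n$-th syzygy of $S\otimes_RX\in S\otimes_R\mathcal{X}$, which proves (1). The case $n=0$ is trivial.

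For part (2), let $Y\in\Omega^n(S\text{-}\mathrm{mod})$. Choose $M\in S\text{-}\mathrm{mod}$ and an $S$-projective resolution $Q_\bullet\rightarrow M$ with $Y\cong\mathrm{Im}(Q_n\rightarrow Q_{n-1})$. Apply the restriction functor $f_*$, which is exact by Section~\ref{Section2.5}. Each $Q_i$ is a summand of some $S^m$, so ${}_RQ_i$ is a summand of ${}_RS^m$. Since ${}_RS$ is projective, ${}_RQ_i$ is projective, and it is finitely generated because ${}_RS$ is, by the standing Artin algebra assumptions. Restriction sends images to images, so ${}_RY$ is the $n$-th syzygy of ${}_RM$ in this $R$-projective resolution, that is, ${}_RY\in\Omega^n(R\text{-}\mathrm{mod})$.

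The main subtlety is bookkeeping with the definition of syzygy used in the paper. The statement is taken up to isomorphism, and the resolution has to be allowed to be arbitrary rather than minimal; with that convention no correction by projective summands via Schanuel's Lemma is needed. I also check finite generation: $S\otimes_R-$ preserves finite generation for any ring map, and for restriction it follows from ${}_RS$ being finitely generated, which holds in the Artin algebra setting.
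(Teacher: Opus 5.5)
Your proposal is correct and follows the paper's argument. For (1) the paper applies the exact, projective-preserving functor $S\otimes_R-$ to a resolution, citing an external lemma for the image compatibility that you check directly. For (2) it restricts an $S$-projective resolution and uses that each ${}_RQ_i$ is a summand of a free $R$-module, hence projective because ${}_RS$ is projective.
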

\begin{proof}
(1)~Let~$M\in\Omega^{n}(\mathcal{X})$,~i.e.~$M$ is an~$n$-th syzygy object of some module $X\in\mathcal{X}$.~Since~$S\otimes_{R}-$ is an exact functor and preserves projectives,~$S\otimes_{R}M$ is an $n$-th syzygy object of $S\otimes_{R}X$ by~\cite[Lemma~3.9]{MZL2026}.~Hence~$S\otimes_{R}M\in\Omega^{n}(S\otimes_{R}\mathcal{X})$.

(2)~First,~for any~$Q\in\mathrm{proj}(S)$,~there exists $Q^{'}\in S\text{-}\mathrm{mod}$ such that~$Q\oplus Q^{'}\cong S^{(I)}$ as $S$-modules, and it is an isomorphism as $R$-modules. Since~$_{R}S$ is projective,~we have that
$_{R}Q$ is projective.~Next, let $M\in S\text{-}\mathrm{mod}$ such that~$Y$ is an $n$-th syzygy object of $M$, then there is an exact sequence in $S\text{-}\mathrm{mod}$
$$0\rightarrow Y\rightarrow Q_{n-1}\rightarrow\cdots\rightarrow Q_{1}\rightarrow Q_{0}\rightarrow M\rightarrow0$$
with each $Q_{i}\in\mathrm{proj}(S)$ for $0\leq i\leq n-1$.~This exact sequence is also exact in $R\text{-}\mathrm{mod}$.~Since~$_{R}(Q_{i})\in\mathrm{proj}(R)$,~we get that~$_{R}(Y)$ is a syzygy object of $_{R}M$.~Hence~$_{R}(Y)\in\Omega^{n}(R\text{-}\mathrm{mod})$.
\end{proof}

Our first main result is now presented as below.

\begin{thm}\label{key}
Let~$S\geq R$~be an excellent ring extension.~Then
$$\mathrm{ext.dim}(\Omega^{j}(R\text{-}\mathrm{mod}))=
\mathrm{ext.dim}(\Omega^{j}(S\text{-}\mathrm{mod}))$$
holds for any~$j\in\mathbb{N}$.
\end{thm}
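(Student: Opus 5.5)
We prove the two inequalities separately. Both use the exact functors $S\otimes_R-:R\text{-}\mathrm{mod}\to S\text{-}\mathrm{mod}$ and restriction $f_*:S\text{-}\mathrm{mod}\to R\text{-}\mathrm{mod}$, together with Lemma~\ref{lem:2.4} and Lemma~\ref{lem3.1}. The basic facts about an excellent extension $S\geq R$ that we use are:
\begin{enumerate}[label=(\alph*)]
\item $_RS$ and $S_R$ are free of finite rank, so $S\otimes_R-$ is exact, preserves projectives, and $f_*$ preserves projectives.
\item Every $R$-module $M$ is a direct summand of $f_*(S\otimes_RM)$. Indeed, $R s_1$ may be taken to be $R$ (as in the standard setting where $s_1=1$), giving $_R(S\otimes_R M)\cong\bigoplus_j s_j R\otimes_R M$ with $M$ as one summand.
\item Every $S$-module $N$ is a direct summand of $S\otimes_R f_*N$. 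The multiplication map $S\otimes_R N\to N$ is split as a map of $R$-modules by $n\mapsto 1\otimes n$. By condition (1) of the definition ($R$-split implies $S$-split, applied to the kernel), it is split as a map of $S$-modules.
\end{enumerate}

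\textbf{Inequality $\mathrm{ext.dim}(\Omega^{j}(S\text{-}\mathrm{mod}))\le \mathrm{ext.dim}(\Omega^{j}(R\text{-}\mathrm{mod}))$.} Suppose $\Omega^j(R\text{-}\mathrm{mod})\subseteq[M]_{n+1}$. Let $Y\in\Omega^j(S\text{-}\mathrm{mod})$. By Lemma~\ref{lem3.1}(2), $f_*Y\in\Omega^j(R\text{-}\mathrm{mod})\subseteq[M]_{n+1}$. Lemma~\ref{lem:2.4} applied to the exact functor $S\otimes_R-$ gives $S\otimes_R f_*Y\in[S\otimes_RM]_{n+1}$. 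By (c), $Y$ is a direct summand of $S\otimes_R f_*Y$, and $[\,\cdot\,]_{n+1}$ is closed under summands, so $Y\in[S\otimes_RM]_{n+1}$.

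\textbf{Inequality $\mathrm{ext.dim}(\Omega^{j}(R\text{-}\mathrm{mod}))\le\mathrm{ext.dim}(\Omega^{j}(S\text{-}\mathrm{mod}))$.} Suppose $\Omega^j(S\text{-}\mathrm{mod})\subseteq[N]_{n+1}$. Let $X\in\Omega^j(R\text{-}\mathrm{mod})$, say $X=\Omega^j_R(Z)$. By Lemma~\ref{lem3.1}(1), $S\otimes_RX\in\Omega^j(S\text{-}\mathrm{mod})\subseteq[N]_{n+1}$. Restriction $f_*$ is exact, so Lemma~\ref{lem:2.4} gives $f_*(S\otimes_RX)\in[f_*N]_{n+1}$. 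By (b), $X$ is a direct summand of $f_*(S\otimes_R X)$, hence $X\in[f_*N]_{n+1}$.

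Combining the two inequalities gives equality, including the case where both sides are infinite.

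\textbf{Main obstacle.} The delicate points are justifying the splittings (b) and (c).

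For (b), we need $R$ to be an $R$-bimodule direct summand of $S$. The plan is to use the common basis with $Rs_j=s_jR$ and normalise so that $s_1=1$, which is standard for excellent extensions. If this normalisation is unavailable, one can argue instead from the $R$-freeness of $S$ together with condition (1) of the definition.

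For (c), the $S$-splitting comes precisely from condition (1) of the definition (the relative projectivity of excellent extensions). This is where that condition is essential.

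Finally, we must check that $\Omega^j$ is taken up to projective summands. This is harmless because $[\,\cdot\,]_{n+1}$ is closed under direct summands and both categories contain all projectives via syzygies of projectives.
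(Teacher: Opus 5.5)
Your proposal is correct and follows the paper's proof. Both directions use the exact functors $S\otimes_R-$ and restriction of scalars, with Lemma~\ref{lem3.1} moving syzygies across, Lemma~\ref{lem:2.4} transporting membership in $[\,\cdot\,]_{n+1}$, and a direct-summand argument to finish. The only difference is that you justify the two splittings yourself: (b) via the normalisation $s_1=1$ of the common basis, which the paper uses implicitly when it appeals to freeness of $S$, and (c) via condition (1) applied to the kernel of the multiplication map $S\otimes_R N\to N$, which is precisely the content of the lemma of Xue that the paper cites.
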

\begin{proof} Assume $n:=\mathrm{ext.dim}(\Omega^{j}(S\text{-}\mathrm{mod}))$.~Then there exists~$Y\in S\text{-}\mathrm{mod}$ such that~$\Omega^{j}(S\text{-}\mathrm{mod})
\subseteq[Y]_{n+1}$.~Let~$X\in\Omega^{j}(R\text{-}\mathrm{mod})$.~Then, by Lemma~\ref{lem3.1}, we have
$$S\otimes_{R}X\in S\otimes_{R}\Omega^{j}(R\text{-}\mathrm{mod})
\subseteq\Omega^{j}(S\text{-}\mathrm{mod})\subseteq[Y]_{n+1}.$$
Therefore,~there exists~an exact sequence of $S$-modules
$$0\rightarrow M_{1}\rightarrow (S\otimes_{R}X)\oplus X^{'}\rightarrow M_{2}\rightarrow0$$
such that~$M_{1}\in[Y]_{1}$ and~$M_{2}\in[Y]_{n}$.~Note that the sequence is also exact in $R\text{-}\mathrm{mod}$,~thus we can prove that $_{R}(S\otimes_{R}X)\in[_{R}Y]_{n+1}$ by induction on $n$.~Moreover,~the freeness of $S$ as $R$-module implies $_{R}X$ is a direct summand of $_{R}(S\otimes_{R}X)$.Thus,~$X\in[_{R}Y]_{n+1}$ and $\mathrm{ext.dim}(\Omega^{j}(R\text{-}\mathrm{mod}))\leq n$.

Assume $m:=\mathrm{ext.dim}(\Omega^{j}(R\text{-}\mathrm{mod}))$.~Then there exists~$T\in R\text{-}\mathrm{mod}$ such that~$\Omega^{j}(R\text{-}\mathrm{mod})
\subseteq[T]_{m+1}$.~Let $Y\in\Omega^{j}(S\text{-}\mathrm{mod})$.~Then $_{R}Y\in
\Omega^{j}(R\text{-}\mathrm{mod})$ by Lemma \ref{lem3.1},~and~$S\otimes_{R}Y
\in[S\otimes_{R}T]_{m+1}$ by Lemma~\ref{lem:2.4}.~Moreover,~by \cite[Lemma~1.1]{Xue1996}, we have $_{S}Y$ is a direct summand of $S\otimes_{R}Y$.~Thus~$Y\in[S\otimes_{R}T]_{m+1}$ and~$\mathrm{ext.dim}(\Omega^{j}(S\text{-}\mathrm{mod}))\leq m$.
\end{proof}

\begin{rem}\label{rem-key}
{\rm Indeed, if set $j=0$ in Theorem \ref{key}, then we obtain the invariance of extension dimension in \cite[Theorem~4.2]{17}, since $\Omega^{0}(S\text{-}\mathrm{mod})
=S\text{-}\mathrm{mod}$ for any Artin ring $S$.}
\end{rem}

We will apply Theorem~\ref{key} in the following example.

\begin{exa}~{\rm Let $R$ be a finite-dimensional algebra over a field $k$.~It follows from \cite[Example~2.2]{11} that there are following excellent extensions:\\
$(1)$~The matrix ring $M_{n}(R)$ is an excellent extension~of $R$;\\
$(2)$~The skew group ring $R\ast G$ is an excellent extension of $R$, where $G$ is a finite group~with $\mid G\mid^{-1}\in R$;\\
$(3)$~$R\otimes_{k}F$ is an excellent extension of $R$,~where $F$ be a finite separable field extension of $k$;\\
$(4)$~$kA$ is an excellent extension of $kB$,~where $A$ is a finite group,~$B$ is a normal subgroup of $A$, and $B$ contains a Sylow subgroup of $A$.

Therefore,~combining Theorem~\ref{key}, the following equalities hold for $j\geq0$:\\
$(1^{'})$~$\mathrm{ext.dim}(\Omega^{j}(R\text{-}\mathrm{mod}))=
\mathrm{ext.dim}(\Omega^{j}(M_{n}(R)\text{-}\mathrm{mod}))$;\\
$(2^{'})$~$\mathrm{ext.dim}(\Omega^{j}(R\text{-}\mathrm{mod}))=
\mathrm{ext.dim}(\Omega^{j}((R\ast G)\text{-}\mathrm{mod}))$;\\
$(3^{'})$~$\mathrm{ext.dim}(\Omega^{j}(R\text{-}\mathrm{mod}))=
\mathrm{ext.dim}(\Omega^{j}((R\otimes_{k}F)\text{-}\mathrm{mod}))$;\\
$(4^{'})$~$\mathrm{ext.dim}(\Omega^{j}(kA\text{-}\mathrm{mod}))=
\mathrm{ext.dim}(\Omega^{j}(kB\text{-}\mathrm{mod}))$.}
\end{exa}

Recently, syzygy finite algebras~\cite{BLM2019} form a crucial class for which the finitistic dimension conjecture is known to hold.~Let $n\in\mathbb{N}$.~Recall that an algebra $R$ is $n$-$\mathbf{syzygy~finite}$~if the number of non-isomorphic indecomposable objects in~$\Omega^{n}(R\text{-}\mathrm{mod})$~is finite.~In particular,~the case $n=0$~recovers the classical representation-finite algebras.~Next,~we generalize the invariance of representation-finite type of algebras in~\cite[Theorem~1.1]{11} to the syzygy-finite type.

\begin{cor}\label{prop:3.6}
Let~$S\geq R$~be an excellent ring extension.~Then $R$~is $n$-syzygy finite if and only if $S$~is $n$-syzygy finite.
\end{cor}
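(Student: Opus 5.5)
We prove the two implications separately, reusing the two transfer mechanisms from the proof of Theorem~\ref{key}: induction $S\otimes_R-$ combined with the splitting result of \cite[Lemma~1.1]{Xue1996}, and restriction of scalars combined with the freeness of $_RS$. The target is a direct-summand argument showing that each $n$-th syzygy on one side is a summand of a finite direct sum built from finitely many indecomposables on the other side. Note that $\Omega^{n}(R\text{-}\mathrm{mod})$ is closed under finite direct sums and contains $\mathrm{proj}(R)$. By Krull--Schmidt, $R$ is $n$-syzygy finite exactly when there is a module $M$ with $\Omega^{n}(R\text{-}\mathrm{mod})\subseteq\mathrm{add}(M)$; one may take $M$ to be the direct sum of the finitely many indecomposables.

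Suppose $S$ is $n$-syzygy finite, and let $Y$ be the sum of representatives of the indecomposables in $\Omega^{n}(S\text{-}\mathrm{mod})$, so that $\Omega^{n}(S\text{-}\mathrm{mod})\subseteq\mathrm{add}(Y)$. Take $X\in\Omega^{n}(R\text{-}\mathrm{mod})$. By Lemma~\ref{lem3.1}(1), $S\otimes_R X\in\Omega^{n}(S\text{-}\mathrm{mod})$, which applies because $S_R$ is free and hence $S\otimes_R-$ is exact. Therefore $S\otimes_R X\in\mathrm{add}(Y)$, and restricting gives $_R(S\otimes_RX)\in\mathrm{add}(_RY)$. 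Since $_R(S\otimes_RX)\cong\bigoplus_j s_j\otimes X$, and the term $1\otimes X\cong X$ appears as a summand (one may choose the basis so that it contains $1$, as in the proof of Theorem~\ref{key}), we get $X\in\mathrm{add}(_RY)$. Every indecomposable in $\Omega^{n}(R\text{-}\mathrm{mod})$ is therefore an indecomposable summand of $_RY$. There are only finitely many such summands up to isomorphism, so $R$ is $n$-syzygy finite.

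Conversely, suppose $\Omega^{n}(R\text{-}\mathrm{mod})\subseteq\mathrm{add}(T)$. For $Y\in\Omega^{n}(S\text{-}\mathrm{mod})$, Lemma~\ref{lem3.1}(2) gives $_RY\in\Omega^{n}(R\text{-}\mathrm{mod})\subseteq\mathrm{add}(T)$. Since $S\otimes_R-$ is additive, $S\otimes_RY\in\mathrm{add}(S\otimes_RT)$. By \cite[Lemma~1.1]{Xue1996}, $Y$ is a direct summand of $S\otimes_RY$, so $Y\in\mathrm{add}(S\otimes_RT)$, and $S$ is $n$-syzygy finite.

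The only delicate point is the claim that $X$ is a direct summand of $_R(S\otimes_RX)$. The first half of the proof of Theorem~\ref{key} already asserts this, and we invoke it the same way. Everything else is routine: the equivalence between syzygy-finiteness and the condition ``$\subseteq\mathrm{add}(M)$'' uses only Krull--Schmidt over Artin algebras. Alternatively, one could observe that $\mathcal{C}\subseteq[M]_1$ is exactly the condition $\mathrm{ext.dim}(\mathcal{C})=0$, and read off the corollary from Theorem~\ref{key} with $j=n$. The direct argument above avoids having to justify that identification separately.
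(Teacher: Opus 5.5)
Your proof is correct and is essentially the paper's proof. The paper observes that $n$-syzygy finiteness means $\mathrm{ext.dim}(\Omega^{n}(R\text{-}\mathrm{mod}))=0$ and applies Theorem~\ref{key}; read at level $[-]_{1}=\mathrm{add}(-)$, that theorem's proof is exactly your two transfer arguments ($S\otimes_R-$ followed by restriction and the splitting of $X$ off ${}_R(S\otimes_RX)$ in one direction; restriction followed by $S\otimes_R-$ and Xue's splitting of $Y$ off $S\otimes_RY$ in the other). Your closing remark oversells the direct route, because "$\mathcal{C}\subseteq\mathrm{add}(M)=[M]_{1}$ for some $M$" is by Definition~\ref{df:2.1} literally $\mathrm{ext.dim}(\mathcal{C})=0$, so your Krull--Schmidt reformulation is the same step the paper calls obvious (and both of you tacitly count indecomposable direct summands of $n$-th syzygies, since $\Omega^{n}(R\text{-}\mathrm{mod})$ need not be closed under summands).
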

\begin{proof} Obviously, $R$~is $n$-syzygy finite if and only if~$\mathrm{ext.dim}(\Omega^{n}(R\text{-}\mathrm{mod}))=0$.~Therefore,~the statement follows by Theorem~\ref{key}.
\end{proof}

Let $\mathrm{pd}_{R}(M)$ denote the projective dimension of an $R$-module $M$,~and set $\mathcal{P}^{<\infty}(R):=\{M\in R\text{-}\mathrm{mod}\mid \mathrm{pd}_{R}(M)<\infty\}$.~The finitistic dimension of~$R$, written as $\mathrm{fin.dim}(R)$, is defined as $\mathrm{sup}\{\mathrm{pd}_{R}(M)\mid M\in\mathcal{P}^{<\infty}(R)\}$.~In the reset of this section,~we investigate the finiteness of finitistic dimension under an excellent ring extension.~We begin with
the following lemma.

\begin{lem}\label{syzygy-lemme-3.7}
Let~$S\geq R$~be an excellent ring extension,~and~let~$j\in\mathbb{N}$.~Then
for any~$Y\in\Omega^{j}(\mathcal{P}^{<\infty}(S))$, one has $_{R}Y\in\Omega^{j}(\mathcal{P}^{<\infty}(R))$.
\end{lem}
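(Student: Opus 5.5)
The plan is to imitate the proof of Lemma~\ref{lem3.1}(2) and keep track of projective dimensions. Take $Y\in\Omega^{j}(\mathcal{P}^{<\infty}(S))$. Then $Y$ is a $j$-th syzygy of some $M\in S\text{-}\mathrm{mod}$ with $\mathrm{pd}_{S}(M)<\infty$, so there is an exact sequence in $S\text{-}\mathrm{mod}$
$$0\rightarrow Y\rightarrow Q_{j-1}\rightarrow\cdots\rightarrow Q_{0}\rightarrow M\rightarrow0$$
with each $Q_{i}\in\mathrm{proj}(S)$. (For $j=0$ simply $Y=M$.) By Section~\ref{Section2.5}, restricting scalars along the inclusion $R\rightarrow S$ keeps this sequence exact in $R\text{-}\mathrm{mod}$.

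Since $_{R}S$ is free by condition (3) of an excellent extension, the first step of the proof of Lemma~\ref{lem3.1}(2) shows that each $_{R}Q_{i}$ is projective. Hence $_{R}Y$ is a $j$-th syzygy of $_{R}M$. It therefore remains only to show $_{R}M\in\mathcal{P}^{<\infty}(R)$; we will in fact show $\mathrm{pd}_{R}(M)\leq \mathrm{pd}_{S}(M)$.

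For this, let $d=\mathrm{pd}_{S}(M)$ and take a finite projective resolution $0\rightarrow P_{d}\rightarrow\cdots\rightarrow P_{0}\rightarrow M\rightarrow0$ in $S\text{-}\mathrm{mod}$. Restricting to $R$ gives an exact sequence whose terms are again $R$-projective, by the same argument as above. This yields $\mathrm{pd}_{R}(M)\leq d<\infty$, and so $_{R}Y\in\Omega^{j}(\mathcal{P}^{<\infty}(R))$.

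I do not expect a serious obstacle. The only point needing care is that the restriction of a projective $S$-module is $R$-projective, and this already appears in Lemma~\ref{lem3.1}(2). One should also note that the definition of $\Omega^{j}(\mathcal{X})$ allows any resolution, so we may use the truncated one displayed above, which is consistent with Schanuel's Lemma. Notably, only the freeness of $_{R}S$ is used, not the splitting condition (1).
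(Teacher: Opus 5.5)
Your proposal is correct and follows the same route as the paper. Both restrict the truncated $S$-projective resolution $0\rightarrow Y\rightarrow Q_{j-1}\rightarrow\cdots\rightarrow Q_{0}\rightarrow M\rightarrow0$ to $R$ and use the freeness of $_{R}S$ to see that each $_{R}Q_{i}$ is projective. Your extra step, restricting a finite projective resolution of $M$ to get $\mathrm{pd}_{R}(M)\leq\mathrm{pd}_{S}(M)<\infty$, supplies the justification the paper leaves implicit when it simply asserts $_{R}M\in\mathcal{P}^{<\infty}(R)$.
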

\begin{proof}
Let $Y\in\Omega^{j}(\mathcal{P}^{<\infty}(S))$.~Then there exists $U\in
\mathcal{P}^{<\infty}(S)$ such that $Y$ is $j$-th syzygy module of $U$.~Hence, there is an exact sequence in $S\text{-}\mathrm{mod}$
$$0\rightarrow Y\rightarrow Q_{j-1}\rightarrow Q_{j-2}\rightarrow\cdots\rightarrow
Q_{0}\rightarrow U\rightarrow0,$$
such that $Q_{i}\in\mathrm{proj}(S)$ for $0\leq i\leq j-1$.~This sequence is also exact in~$R\text{-}\mathrm{mod}$. Moreover,~$_{R}S$ is free implies that $_{R}Q_{i}\in\mathrm{proj}(R)$.~Thus $_{R}Y$ is a $j$-th syzygy object of $_{R}U$. On the other hand,~$_{R}U\in\mathcal{P}^{<\infty}(R)$, hence~$_{R}Y\in
\Omega^{j}(\mathcal{P}^{<\infty}(R))$
\end{proof}

\begin{thm}\label{syzygy-prop-3.8}
Let~$S\geq R$~be an excellent ring extension.~Then
$$\mathrm{ext.dim}(\Omega^{i}(\mathcal{P}^{<\infty}(R)))=
\mathrm{ext.dim}(\Omega^{i}(\mathcal{P}^{<\infty}(S))),~\text{for~all~}i\geq0.$$
\end{thm}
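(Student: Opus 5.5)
We mimic the two-step argument of Theorem~\ref{key}, replacing $\Omega^{i}(R\text{-}\mathrm{mod})$ by $\Omega^{i}(\mathcal{P}^{<\infty}(R))$ throughout. Two extra facts are needed. First, $S\otimes_{R}-$ sends $\mathcal{P}^{<\infty}(R)$ into $\mathcal{P}^{<\infty}(S)$. Second, restriction sends $\Omega^{i}(\mathcal{P}^{<\infty}(S))$ into $\Omega^{i}(\mathcal{P}^{<\infty}(R))$, and this is Lemma~\ref{syzygy-lemme-3.7}.

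For the first fact, note that $S_{R}$ is free, so $S\otimes_{R}-$ is exact and sends projectives to projectives. Hence a finite projective resolution $0\to P_{d}\to\cdots\to P_{0}\to M\to0$ of $M\in\mathcal{P}^{<\infty}(R)$ becomes a finite projective resolution of $S\otimes_{R}M$. So $S\otimes_{R}\mathcal{P}^{<\infty}(R)\subseteq\mathcal{P}^{<\infty}(S)$. Combining this with Lemma~\ref{lem3.1}(1) applied to $\mathcal{X}=\mathcal{P}^{<\infty}(R)$ gives
$$S\otimes_{R}\Omega^{i}(\mathcal{P}^{<\infty}(R))\subseteq\Omega^{i}(S\otimes_{R}\mathcal{P}^{<\infty}(R))\subseteq\Omega^{i}(\mathcal{P}^{<\infty}(S)).$$

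For the inequality $\mathrm{ext.dim}(\Omega^{i}(\mathcal{P}^{<\infty}(R)))\leq\mathrm{ext.dim}(\Omega^{i}(\mathcal{P}^{<\infty}(S)))$, let the right side be $n$ and choose $Y\in S\text{-}\mathrm{mod}$ with $\Omega^{i}(\mathcal{P}^{<\infty}(S))\subseteq[Y]_{n+1}$. For $X\in\Omega^{i}(\mathcal{P}^{<\infty}(R))$ we get $S\otimes_{R}X\in[Y]_{n+1}$. The restriction functor $f_{*}$ is exact, so Lemma~\ref{lem:2.4} gives ${}_{R}(S\otimes_{R}X)\in[{}_{R}Y]_{n+1}$. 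Since ${}_{R}S$ is free with a basis that can be taken to contain $1$, ${}_{R}X$ is a direct summand of ${}_{R}(S\otimes_{R}X)$, which is isomorphic to $X^{(m)}$. Because $[{}_{R}Y]_{n+1}$ is closed under summands, $X\in[{}_{R}Y]_{n+1}$.

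For the reverse inequality, let $m=\mathrm{ext.dim}(\Omega^{i}(\mathcal{P}^{<\infty}(R)))$ and choose $T$ with $\Omega^{i}(\mathcal{P}^{<\infty}(R))\subseteq[T]_{m+1}$. For $Y\in\Omega^{i}(\mathcal{P}^{<\infty}(S))$, Lemma~\ref{syzygy-lemme-3.7} gives ${}_{R}Y\in[T]_{m+1}$. Applying the exact functor $S\otimes_{R}-$ and Lemma~\ref{lem:2.4} gives $S\otimes_{R}Y\in[S\otimes_{R}T]_{m+1}$. By \cite[Lemma~1.1]{Xue1996}, ${}_{S}Y$ is a direct summand of $S\otimes_{R}Y$, so $Y\in[S\otimes_{R}T]_{m+1}$. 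If either side is infinite, the corresponding inequality is trivial.

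The only step that is new compared with Theorem~\ref{key} is checking that $S\otimes_{R}-$ preserves finite projective dimension, and it is routine. The point that needs care is that Lemma~\ref{lem3.1}(1) really produces a syzygy of a module lying in $\mathcal{P}^{<\infty}(S)$. This holds because both the resolution and the module are transported by the same exact functor.
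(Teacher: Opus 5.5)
Your proof is correct and follows the paper's route: the paper's proof only says to rerun the argument of Theorem~\ref{key} using Lemma~\ref{syzygy-lemme-3.7}, and you carry this out in full, including the needed but unstated check that $S\otimes_R-$ sends $\mathcal{P}^{<\infty}(R)$ into $\mathcal{P}^{<\infty}(S)$. One harmless slip: in general ${}_R(S\otimes_R X)\cong\bigoplus_j s_jR\otimes_R X$ is a sum of twists of $X$ by the automorphisms $\sigma_j$ with $rs_j=s_j\sigma_j(r)$, so it need not be $X^{(m)}$; however, you only use the summand $R\otimes_R X\cong X$ coming from the basis element $s_1=1$, so nothing breaks.
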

\begin{proof}
Combining Lemma~\ref{syzygy-lemme-3.7}, we can easily prove this Theorem by a proof process similar to Theorem \ref{key}.
\end{proof}

By Theorem~\ref{syzygy-prop-3.8}~and the equivalence~(\ref{equivelence-FDC}) in Sect. 1,~we have the following equivalent characterization for finitistic dimension conjecture.

\begin{cor}\label{FDC-cor-3.9}
{\rm(\cite[Theorem~3.2]{Zhang2018})}
Let~$S\geq R$~be an excellent extension.~Then~$R$ satisfies~$\mathbf{FDC}$ if and only if so does~$S$.
\end{cor}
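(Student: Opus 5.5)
The statement to prove is Corollary~\ref{FDC-cor-3.9}. The plan is to combine Theorem~\ref{syzygy-prop-3.8} with the equivalence (\ref{equivelence-FDC}) from \cite{17}. Recall that (\ref{equivelence-FDC}) says an Artin algebra $A$ satisfies $\mathbf{FDC}$, i.e.\ $\mathrm{fin.dim}(A)<\infty$, if and only if there is some $s\geq 0$ with $\mathrm{ext.dim}(\Omega^{s}(\mathcal{P}^{<\infty}(A)))\leq 1$. Both $R$ and $S$ are Artin algebras, since $S$ is finitely generated free over $R$. So the equivalence applies to both rings, and no further hypotheses need checking.

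Suppose first that $R$ satisfies $\mathbf{FDC}$. By (\ref{equivelence-FDC}) there is $s\geq0$ with $\mathrm{ext.dim}(\Omega^{s}(\mathcal{P}^{<\infty}(R)))\leq1$. Theorem~\ref{syzygy-prop-3.8}, applied with $i=s$, gives
$$\mathrm{ext.dim}(\Omega^{s}(\mathcal{P}^{<\infty}(S)))=\mathrm{ext.dim}(\Omega^{s}(\mathcal{P}^{<\infty}(R)))\leq 1.$$
Applying (\ref{equivelence-FDC}) to $S$ with the same $s$ shows that $S$ satisfies $\mathbf{FDC}$. The converse is the same argument with the roles of $R$ and $S$ exchanged, since Theorem~\ref{syzygy-prop-3.8} is an equality valid for every $i$.

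There is no real obstacle here. The only points to watch are the following.
\begin{enumerate}
\item The equality in Theorem~\ref{syzygy-prop-3.8} must hold for the same index $s$ on both sides. It does, because the theorem holds for all $i\geq0$.
\item The equality must be understood in $\mathbb{N}\cup\{\infty\}$, so that the bound $\leq 1$ transfers in both directions.
\end{enumerate}
For completeness one could also note the direct route of \cite[Theorem~3.2]{Zhang2018}. Restriction of scalars preserves finite projective dimension because ${}_RS$ is free. Conversely, $S\otimes_R-$ is exact and preserves projectives, and every $S$-module is a summand of $S\otimes_R Y$. Together these give comparable finitistic dimensions. The extension-dimension argument above is the one intended here.
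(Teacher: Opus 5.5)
Your proposal is correct and matches the paper's argument: apply the equivalence (\ref{equivelence-FDC}) to both $R$ and $S$, and transfer the bound $\leq 1$ at the same index $s$ using the equality in Theorem~\ref{syzygy-prop-3.8}. One small point: that $S$ is an Artin algebra comes from the paper's standing convention, not from freeness of ${}_RS$ alone, since freeness does not make the base ring $k$ central in $S$; this does not affect the argument.
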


\section{Extension dimensions under cleft extensions}
In this section, we devote to studying the extension dimension of syzygy module category under a cleft extension.~Recall from \cite[Definition~2.1]{Beligiannis2000} that a $\mathbf{cleft~extension}$ of $S\text{-}\mathrm{mod}$ is a module category $R\text{-}\mathrm{mod}$ with the functors
$$\xymatrix@R=1.8cm@C=1.8cm{
S\text{-}\mathrm{mod}
    \ar^-{\mathrm{i}}[r]
& R\text{-}\mathrm{mod}
    \ar^-{\mathrm{e}}[r]
& S\text{-}\mathrm{mod}
    \ar_-{\mathrm{l}}@/_1.8pc/[l]},$$
and satisfying the following:\\
(1)~The functor $\mathrm{e}$ is faithful and exact;\\
(2)~The pair $(\mathrm{l},\mathrm{e})$ is an adjoint pair;\\
(3)~There is a natural isomorphism $\eta:\mathrm{ei}\rightarrow \mathrm{Id}_{S\text{-}\mathrm{mod}}$.

In this paper,~we denote $(S\text{-}\mathrm{mod},R\text{-}\mathrm{mod},\mathrm{i},\mathrm{e},\mathrm{l})$ as the cleft extension of $S\text{-}\mathrm{mod}$.~In \cite{Kostas2026},~the notion of endofunctors on the cleft extension $(S\text{-}\mathrm{mod},R\text{-}\mathrm{mod},\mathrm{i},\mathrm{e},
\mathrm{l})$ has been introduced.~Denote by $\mu:\mathrm{le}\rightarrow\mathrm{Id}_{R\text{-}\mathrm{mod}}$ the counit of the adjoint pair $(\mathrm{l},\mathrm{e})$.~Notice that $\mu_{X}:\mathrm{le}(X)\rightarrow X$ is an epimorphism for any $X\in R\text{-}\mathrm{mod}$.~Then the following sequence
\begin{align}\label{exact-seq-endofunctor}
0\rightarrow \mathrm{G}(X)\rightarrow \mathrm{le}(X)\rightarrow X\rightarrow0
\end{align}
is exact in~$R\text{-}\mathrm{mod}$, where $\mathrm{G}(X):=\mathrm{Ker}(\mu_{X})$. In particular,~$\mathrm{i}(N)\in R\text{-}\mathrm{mod}$ for any $N\in S\text{-}\mathrm{mod}$.~And then define $\mathrm{F}(N):=\mathrm{e}(\mathrm{G}(\mathrm{i}(N)))$.~Therefore,~the assignment~$N\mapsto \mathrm{F}(N)$ defines an endofunctor $\mathrm{F}:S\text{-}\mathrm{mod}\rightarrow S\text{-}\mathrm{mod}$.~Note that there ie an isomorphism~$\mathrm{el}\cong \mathrm{Id}_{S\text{-}\mathrm{mod}}\oplus \mathrm{F}$.

\begin{lem}\label{lem-endofunctor2.10}
{\rm(\cite[Lemma~2.2]{MZL2026})}~Let $(S\text{-}\mathrm{mod},R\text{-}\mathrm{mod},\mathrm{i},\mathrm{e},
\mathrm{l})$ be a cleft extension. The following statements hold for $n\geq1$:\\
$(1)$~$\mathrm{F}^{n}=0$ if and only if $\mathrm{G}^{n}=0$;\\
$(2)$~For any $M\in R\text{-}\mathrm{mod}$, there is an exact sequence
$$0\rightarrow \mathrm{G}^{n}(M)\rightarrow \mathrm{l}\mathrm{F}^{n-1}\mathrm{e}(M)\rightarrow \mathrm{G}^{n-1}(M)\rightarrow0.$$
\end{lem}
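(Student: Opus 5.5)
The plan is to reduce both statements to a single natural isomorphism
$$\mathrm{e}\mathrm{G}\cong\mathrm{F}\mathrm{e}\qquad(\ast)$$
of functors $R\text{-}\mathrm{mod}\rightarrow S\text{-}\mathrm{mod}$. Once $(\ast)$ is available, iterating it gives $\mathrm{e}\mathrm{G}^{k}\cong\mathrm{F}^{k}\mathrm{e}$ for all $k\geq0$, and everything else follows formally.

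\emph{Proving $(\ast)$.} Apply the exact functor $\mathrm{e}$ to the sequence (\ref{exact-seq-endofunctor}). This gives an exact sequence
$$0\rightarrow\mathrm{eG}(X)\rightarrow\mathrm{ele}(X)\xrightarrow{\mathrm{e}(\mu_{X})}\mathrm{e}(X)\rightarrow0.$$
By the triangle identity, $\mathrm{e}(\mu_X)\circ\nu_{\mathrm{e}X}=\mathrm{id}$, where $\nu$ is the unit of $(\mathrm{l},\mathrm{e})$. Hence the sequence splits naturally in $X$.

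Now put $N=\mathrm{e}(X)$ and compare $X$ with $\mathrm{i}(N)$. Via $\eta$ we have $\mathrm{ei}(N)\cong N$, so $\mathrm{lei}(N)\cong\mathrm{l}(N)=\mathrm{le}(X)$. Both $\mathrm{e}(\mu_X)$ and $\mathrm{e}(\mu_{\mathrm{i}N})$ are then the canonical retraction $\mathrm{el}(N)\rightarrow N$ determined by the adjunction. Therefore their kernels coincide as subobjects of $\mathrm{el}(N)$:
$$\mathrm{eG}(X)=\mathrm{Ker}\,\mathrm{e}(\mu_X)=\mathrm{Ker}\,\mathrm{e}(\mu_{\mathrm{i}N})=\mathrm{eG}(\mathrm{i}(N))=\mathrm{F}(N)=\mathrm{Fe}(X).$$
This identification is natural in $X$, and it is also what underlies the decomposition $\mathrm{el}\cong\mathrm{Id}\oplus\mathrm{F}$.

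\emph{Proof of (2).} Apply (\ref{exact-seq-endofunctor}) to $X=\mathrm{G}^{n-1}(M)$. This gives
$$0\rightarrow\mathrm{G}^{n}(M)\rightarrow\mathrm{le}\mathrm{G}^{n-1}(M)\rightarrow\mathrm{G}^{n-1}(M)\rightarrow0.$$
Then rewrite the middle term as $\mathrm{le}\mathrm{G}^{n-1}(M)\cong\mathrm{l}\mathrm{F}^{n-1}\mathrm{e}(M)$, using the iterated form of $(\ast)$. No exactness of $\mathrm{l}$ or $\mathrm{G}$ is needed, since we only use the defining sequence objectwise.

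\emph{Proof of (1).} Suppose $\mathrm{G}^{n}=0$. Then $\mathrm{F}^{n}\cong\mathrm{F}^{n}\mathrm{ei}\cong\mathrm{e}\mathrm{G}^{n}\mathrm{i}=0$, using $\eta$. Conversely, suppose $\mathrm{F}^{n}=0$. Then $\mathrm{e}\mathrm{G}^{n}(M)\cong\mathrm{F}^{n}\mathrm{e}(M)=0$ for every $M$. Since $\mathrm{e}$ is faithful, $\mathrm{e}(\mathrm{id}_{\mathrm{G}^n M})=0$ forces $\mathrm{id}_{\mathrm{G}^nM}=0$, so $\mathrm{G}^{n}(M)=0$.

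The main obstacle is the identification in $(\ast)$. Abstractly, $\mathrm{el}(N)\cong N\oplus\mathrm{F}(N)\cong N\oplus\mathrm{eG}(X)$ does not allow cancellation. The argument must instead use that $\mathrm{le}(X)$ depends only on $\mathrm{e}(X)$, and that $\mathrm{e}(\mu)$ is the adjunction retraction. This is where I would check the compatibility of $\eta$ with the counit most carefully.
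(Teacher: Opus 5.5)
Your overall plan is the standard one. Note that the paper gives no proof of this lemma, only a citation. The plan is: establish $(\ast)$ $\mathrm{eG}\cong\mathrm{Fe}$; get (2) by applying $0\to\mathrm{G}(X)\to\mathrm{le}(X)\to X\to0$ to $X=\mathrm{G}^{n-1}(M)$; get (1) from $\eta$ and the faithfulness of $\mathrm{e}$. Those deductions from $(\ast)$ are fine.

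\textbf{The gap is in your proof of $(\ast)$.} You claim that $\mathrm{e}(\mu_X)$ and $\mathrm{e}(\mu_{\mathrm{i}N})$ are both ``the canonical retraction $\mathrm{el}(N)\to N$ determined by the adjunction'', so their kernels coincide as subobjects of $\mathrm{el}(N)$. This is false. The adjunction canonically provides only the unit $\nu_N\colon N\to\mathrm{el}(N)$. The retraction $\mathrm{e}(\mu_X)$ of $\nu_{\mathrm{e}X}$ depends on the $R$-module $X$, not only on $\mathrm{e}(X)$.

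For example, take a trivial extension $R=S\ltimes M$, with $\mathrm{e}$ restriction along $S\to R$, $\mathrm{i}$ restriction along $R\to S$, and $\mathrm{l}=R\otimes_S-$. An $R$-module $X$ is an $S$-module with an action map $\phi\colon M\otimes_S X\to X$. As $S$-modules, $\mathrm{ele}(X)=X\oplus(M\otimes_S X)$, and $\mathrm{e}(\mu_X)(x,u)=x+\phi(u)$. Its kernel is the graph $\{(-\phi(u),u)\}$. The kernel of $\mathrm{e}(\mu_{\mathrm{ie}X})$, where $\phi$ is replaced by $0$, is $0\oplus(M\otimes_S X)$. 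These are different subobjects whenever $M$ acts nontrivially on $X$. So your displayed chain breaks at its second equality, and you are back at exactly the cancellation problem you flagged.

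\textbf{The repair is short and leaves the rest of your argument intact.} If $s\colon N\to E$ and $r\colon E\to N$ satisfy $rs=\mathrm{id}_N$, then the composite $\mathrm{Ker}(r)\hookrightarrow E\twoheadrightarrow\mathrm{Coker}(s)$ is an isomorphism; its inverse is induced by $y\mapsto y-sr(y)$.
\begin{itemize}
\item Take $s=\nu_{\mathrm{e}X}$ and $r=\mathrm{e}(\mu_X)$. This gives $\mathrm{eG}(X)\cong\mathrm{Coker}(\nu_{\mathrm{e}X})$, naturally in $X$, because $\mu$ and $\nu$ are natural.
\item Take $X=\mathrm{i}(N)$ and transport along $\eta_N$ using $\mathrm{el}(\eta_N)\circ\nu_{\mathrm{ei}N}=\nu_N\circ\eta_N$. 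This gives $\mathrm{F}(N)\cong\mathrm{Coker}(\nu_N)$, naturally in $N$.
\end{itemize}
Hence $\mathrm{eG}(X)\cong\mathrm{Coker}(\nu_{\mathrm{e}X})\cong\mathrm{Fe}(X)$. The two kernels are different complements of the \emph{same} canonical split monomorphism $\nu_{\mathrm{e}X}$. That, rather than equality of the retractions, is what makes $(\ast)$ true.
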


\begin{lem}\label{lem-Cosyzygy3.11}
{\rm(\cite[Lemma~3.5]{ZH2022})}
Let~$0\rightarrow X_{n}\rightarrow \cdots\rightarrow X_{1}\rightarrow
X_{0}\rightarrow M\rightarrow0$~be an exact sequence in $R\text{-}\mathrm{mod}$ with $n\geq0$.~Then
$$M\in[X_{0}]_{1}\bullet[\Omega^{-1}(X_{1})]_{1}\bullet\cdots\bullet
[\Omega^{-n}(X_{n})]_{1}.$$
\end{lem}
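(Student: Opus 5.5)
The plan is induction on $n$. The case $n=0$ is trivial: the sequence reads $0\rightarrow X_{0}\rightarrow M\rightarrow 0$, so $M\cong X_{0}\in[X_{0}]_{1}$. Throughout, cosyzygies are taken with respect to injective envelopes, and all statements are read up to injective direct summands, as the paper does via Schanuel's lemma. Since $[\cdot]_{1}\bullet\cdots$ is closed under direct summands, the ambiguity only matters where we say so explicitly.

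\emph{Basic step (one short exact sequence).} Let $0\rightarrow A\rightarrow B\rightarrow C\rightarrow 0$ be exact and let $0\rightarrow A\rightarrow I\rightarrow\Omega^{-1}(A)\rightarrow 0$ with $I$ injective. Form the pushout of $B\leftarrow A\rightarrow I$. This gives exact sequences
\[
0\rightarrow B\rightarrow E\rightarrow \Omega^{-1}(A)\rightarrow 0
\quad\text{and}\quad
0\rightarrow I\rightarrow E\rightarrow C\rightarrow 0 .
\]
The second sequence splits because $I$ is injective, so $E\cong C\oplus I$. Consequently $C$ is a direct summand of an object of $[B]_{1}\ast[\Omega^{-1}(A)]_{1}$, that is,
\[
C\in[B]_{1}\bullet[\Omega^{-1}(A)]_{1}.
\]

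\emph{Shift step.} I will also need the following: if $C\in[B]_{1}\bullet[A']_{1}$, then $\Omega^{-k}(C)\in[\Omega^{-k}(B)]_{1}\bullet[\Omega^{-k}(A')]_{1}$ for every $k\geq 0$. To see this, write $C\oplus C'$ as the middle term of $0\rightarrow B''\rightarrow C\oplus C'\rightarrow A''\rightarrow 0$, where $B''\in\mathrm{add}B$ and $A''\in\mathrm{add}A'$. Apply the dual horseshoe lemma. It produces an exact sequence
\[
0\rightarrow \Omega^{-1}(B'')\rightarrow \Omega^{-1}(C\oplus C')\oplus J\rightarrow\Omega^{-1}(A'')\rightarrow 0
\]
with $J$ injective. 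Moreover $\Omega^{-1}$ commutes with finite direct sums up to injective summands. Hence $\Omega^{-1}(C)$ is a direct summand of the middle term, and $\Omega^{-1}(B'')$, $\Omega^{-1}(A'')$ lie in $\mathrm{add}\,\Omega^{-1}(B)$ and $\mathrm{add}\,\Omega^{-1}(A')$ respectively. Iterating this $k$ times gives the shift step.

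\emph{Inductive step.} Assume the statement holds for sequences of length $n-1$, and let $n\geq1$. Put $C:=\mathrm{Coker}(X_{n}\rightarrow X_{n-1})$. This gives $0\rightarrow X_{n}\rightarrow X_{n-1}\rightarrow C\rightarrow 0$, and a shorter exact sequence $0\rightarrow C\rightarrow X_{n-2}\rightarrow\cdots\rightarrow X_{0}\rightarrow M\rightarrow 0$ in which $C$ sits in position $n-1$. By the induction hypothesis,
\[
M\in[X_{0}]_{1}\bullet\cdots\bullet[\Omega^{-(n-2)}(X_{n-2})]_{1}\bullet[\Omega^{-(n-1)}(C)]_{1}.
\]
The basic step gives $C\in[X_{n-1}]_{1}\bullet[\Omega^{-1}(X_{n})]_{1}$. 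The shift step with $k=n-1$ then gives
\[
\Omega^{-(n-1)}(C)\in[\Omega^{-(n-1)}(X_{n-1})]_{1}\bullet[\Omega^{-n}(X_{n})]_{1}.
\]
Finally, the operation $\bullet$ is associative and monotone (Beligiannis/Dao–Takahashi), so $[\mathcal U]_{1}\bullet[\mathcal V]_{1}\subseteq\mathcal W$ implies $\mathcal Z\bullet[\mathcal U]_{1}\bullet[\mathcal V]_{1}\subseteq\mathcal Z\bullet\mathcal W$. Substituting the last display into the previous one yields the claim.

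\emph{Main obstacle.} I expect the delicate point to be the shift step, specifically the bookkeeping of injective summands when applying $\Omega^{-1}$ to a direct summand of an extension. The dual horseshoe lemma does not literally give $\Omega^{-1}(C)$ as the middle term, only something containing it as a summand up to injectives. I would handle this by always taking cosyzygies via injective envelopes, so that $\Omega^{-1}$ is a well-defined additive operation up to injective summands, and by noting that $\bullet$ absorbs direct summands.
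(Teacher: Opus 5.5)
The paper gives no proof of this lemma (it is quoted from Zheng--Huang), so there is nothing in the text to compare your argument against. Your proof is correct and complete.

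Three checks confirm the key points.
\begin{itemize}
\item The pushout step produces $C\oplus I\in[B]_1\ast[\Omega^{-1}(A)]_1$ with the subobject in the left factor. This matches the ordering convention implicit in the paper's proofs (e.g.\ of Lemma \ref{lem-local-4.1}), and that ordering is what makes $[X_0]_1$ appear first in the statement.
\item The horseshoe shift is sound once you fix cosyzygies via injective envelopes. Then $\Omega^{-1}$ is additive on the nose, $\Omega^{-1}\circ\Omega^{-k}=\Omega^{-(k+1)}$, and $\Omega^{-1}(\mathrm{add}\,B)\subseteq\mathrm{add}\,\Omega^{-1}(B)$.
\item Proving the statement for these minimal cosyzygies gives it for every other choice allowed by Schanuel's lemma. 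Adding an injective summand to a generator only enlarges $[\,\cdot\,]_1$, and $\bullet$ is monotone.
\end{itemize}

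There are two cosmetic slips. First, the monotonicity principle you quote in the last step has the inclusion on the wrong side for how you use it. What you need is $\mathrm{add}\,\Omega^{-(n-1)}(C)\subseteq[\Omega^{-(n-1)}(X_{n-1})]_1\bullet[\Omega^{-n}(X_n)]_1$, which holds because the right-hand side is add-closed. Monotonicity of $\mathcal{Z}\bullet(-)$ and associativity then finish the argument. Second, for $n=1$ the shortened sequence is just $0\rightarrow C\rightarrow M\rightarrow 0$ with an empty prefix, so $M\cong C$ and the basic step alone concludes.

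Structurally, you peel off the $X_n$-end. As a result you only ever shift a two-term product, by iterating what is essentially a two-generator analogue of Lemma \ref{lem-clft4.9}. The other natural induction starts from $0\rightarrow \mathrm{Im}(X_1\rightarrow X_0)\rightarrow X_0\rightarrow M\rightarrow 0$. It would require shifting an $n$-fold $\bullet$-product by $\Omega^{-1}$: the same horseshoe argument, but with a little more bookkeeping.
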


Similar to \cite[Lemma~4.7,~4.9]{MZL2026},~we obtain a dual result as below.

\begin{lem}\label{lem-clft4.9}
Let~$X,Y\in R\text{-}\mathrm{mod}$~with $X\in[Y]_{n}$ for some integer $n\geq1$.~Then there exist $I\in\mathrm{inj}(R)$ and $j$-th syzygy $\Omega^{-j}(Y)$ of $Y$ for some $j\geq0$ such that $[\Omega^{-j}(X)]_{1}\subseteq[\Omega^{-j}(Y)\oplus I]_{n}$.
\end{lem}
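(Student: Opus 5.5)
We plan to prove a slightly stronger, uniform version: for \emph{every} $j\geq0$ and every choice of cosyzygies, taking $I:=D(R)$ (where $D$ is the standard duality, so that $\mathrm{inj}(R)=\mathrm{add}(I)$) gives
$$[\Omega^{-j}(X)]_{1}\subseteq[\Omega^{-j}(Y)\oplus I]_{n}\quad\text{whenever } X\in[Y]_{n}.$$
The statement as written only asks for \emph{some} $j\geq0$, and then the choice $j=0$, $I=0$ already works, because $[Y]_n$ is closed under $\mathrm{add}$. So the content is in the uniform version, which is presumably what is used later. Two harmless facts are used throughout. First, cosyzygies are well defined only up to injective summands, by the dual of Schanuel's Lemma. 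Second, $[\Omega^{-j}(Y)\oplus I]_n$ contains $\mathrm{inj}(R)$ for $n\geq1$, and it is closed under $\mathrm{add}$. Hence the choice of representative of $\Omega^{-j}(X)$ does not matter, and it suffices to show that some representative lies in $[\Omega^{-j}(Y)\oplus I]_n$.

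We will induct on $n$, with $j$ fixed. For $n=1$, suppose $X\oplus X'\cong Y^{m}$. Cosyzygies commute with finite direct sums up to injective summands, so
$$\Omega^{-j}(X)\oplus\Omega^{-j}(X')\oplus E\cong\Omega^{-j}(Y)^{m}\oplus E'$$
for some $E,E'\in\mathrm{inj}(R)$. Thus $\Omega^{-j}(X)\in\mathrm{add}(\Omega^{-j}(Y)\oplus I)=[\Omega^{-j}(Y)\oplus I]_1$.

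For $n\geq2$, write $X\oplus X'$ as the middle term of an exact sequence $0\to A\to X\oplus X'\to B\to0$ with $A\in[Y]_{n-1}$ and $B\in[Y]_1$. The key tool is the dual Horseshoe Lemma. Starting from injective coresolutions of $A$ and $B$, it builds an injective coresolution of $X\oplus X'$ whose terms are the direct sums of the corresponding terms, and it yields exact sequences of cosyzygies
$$0\to\Omega^{-j}(A)\to\Omega^{-j}(X\oplus X')\to\Omega^{-j}(B)\to0$$
for the cosyzygies read off from these coresolutions. Since every choice of $\Omega^{-j}(X\oplus X')$ agrees with this one up to injective summands, and an extra injective summand can be added to the middle term (by adding it to the first term), this suffices.

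By induction, $\Omega^{-j}(A)\in[\Omega^{-j}(Y)\oplus I]_{n-1}$, and by the case $n=1$, $\Omega^{-j}(B)\in[\Omega^{-j}(Y)\oplus I]_{1}$. Hence $\Omega^{-j}(X\oplus X')$ lies in $[\Omega^{-j}(Y)\oplus I]_{n-1}\ast[\Omega^{-j}(Y)\oplus I]_1$. Then $\Omega^{-j}(X)$, which is a direct summand of $\Omega^{-j}(X\oplus X')$ up to injectives, lies in $[\Omega^{-j}(Y)\oplus I]_n$. Closing under $\mathrm{add}$ gives the inclusion for $[\Omega^{-j}(X)]_1$.

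The only delicate point is the bookkeeping of injective summands when passing between different representatives of cosyzygies, notably for direct summands in the $n=1$ step and for the middle term of the horseshoe sequence. This is exactly why $I$ is included. The choice $I=D(R)$ absorbs all such summands at once, independently of $j$ and $n$.
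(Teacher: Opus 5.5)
Your proposal is correct and is essentially the argument the paper intends: the paper writes no proof, saying only that the lemma is the dual of \cite[Lemma~4.7,~4.9]{MZL2026}, i.e.\ a horseshoe-type short exact sequence of cosyzygies combined with induction on $n$, with stray injective summands absorbed into the generator, and this is exactly what you do with $I=\mathrm{D}(R)$. You are also right that the literal statement is trivial for $j=0$, and that the uniform-in-$j$ version you prove is the one actually needed in the proof of Theorem~\ref{thm-clft3.10}.
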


\begin{thm}\label{thm-clft3.10}
Let~$(S\text{-}\mathrm{mod},R\text{-}\mathrm{mod},\mathrm{i},\mathrm{e},\mathrm{l})$~be a cleft extension.~If $\mathrm{e}$ preserves projectives,~$\mathrm{l}$ is exact~and~$\mathrm{F}^{n}=0$ for some $n\geq1$,~then, for any $j\geq0$, one has
$$\mathrm{ext.dim}(\Omega^{j}(S\text{-}\mathrm{mod}))\leq
\mathrm{ext.dim}(\Omega^{j}(R\text{-}\mathrm{mod}))\leq
n\cdot\mathrm{ext.dim}(\Omega^{j}(S\text{-}\mathrm{mod}))+(n-1).$$
\end{thm}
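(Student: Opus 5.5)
For the left inequality, let $m:=\mathrm{ext.dim}(\Omega^{j}(R\text{-}\mathrm{mod}))$ and choose $T\in R\text{-}\mathrm{mod}$ with $\Omega^{j}(R\text{-}\mathrm{mod})\subseteq[T]_{m+1}$. The plan is to show $\Omega^{j}(S\text{-}\mathrm{mod})\subseteq[\mathrm{e}(T)]_{m+1}$. Let $Y\in\Omega^{j}(S\text{-}\mathrm{mod})$ be a $j$-th syzygy of $N$, computed from a projective resolution $Q_\bullet\to N$. Since $\mathrm{l}$ is a left adjoint of the exact functor $\mathrm{e}$, it preserves projectives; by assumption it is also exact. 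Hence $\mathrm{l}(Q_\bullet)\to\mathrm{l}(N)$ is a projective resolution, so $\mathrm{l}(Y)\in\Omega^{j}(R\text{-}\mathrm{mod})\subseteq[T]_{m+1}$. Applying the exact functor $\mathrm{e}$ and Lemma~\ref{lem:2.4} gives $\mathrm{el}(Y)\in[\mathrm{e}(T)]_{m+1}$. Since $\mathrm{el}(Y)\cong Y\oplus\mathrm{F}(Y)$, the module $Y$ is a direct summand of $\mathrm{el}(Y)$, and therefore $Y\in[\mathrm{e}(T)]_{m+1}$.

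For the right inequality, let $d:=\mathrm{ext.dim}(\Omega^{j}(S\text{-}\mathrm{mod}))$ and choose $U\in S\text{-}\mathrm{mod}$ with $\Omega^{j}(S\text{-}\mathrm{mod})\subseteq[U]_{d+1}$. Take $X\in\Omega^{j}(R\text{-}\mathrm{mod})$.

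1. Since $\mathrm{e}$ is exact and preserves projectives, applying $\mathrm{e}$ to a resolution shows $\mathrm{e}(X)\in\Omega^{j}(S\text{-}\mathrm{mod})$.
2. Since $\mathrm{F}$ is built from exact functors, each $\mathrm{F}^{k}\mathrm{e}(X)$ should again be a $j$-th syzygy. Here $\mathrm{F}=\mathrm{e}\mathrm{G}\mathrm{i}$ is exact, and $\mathrm{el}\cong\mathrm{Id}\oplus\mathrm{F}$ shows that $\mathrm{F}$ preserves projectives as a direct summand of $\mathrm{el}$. So $\mathrm{F}^{k}\mathrm{e}(X)\in[U]_{d+1}$ for all $k$.
3. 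By exactness of $\mathrm{l}$ and Lemma~\ref{lem:2.4}, we get $\mathrm{l}\mathrm{F}^{k}\mathrm{e}(X)\in[\mathrm{l}(U)]_{d+1}$.
4. Lemma~\ref{lem-endofunctor2.10}(1) gives $\mathrm{G}^{n}=0$. Lemma~\ref{lem-endofunctor2.10}(2) then yields short exact sequences
$$0\rightarrow \mathrm{G}^{k}(X)\rightarrow \mathrm{l}\mathrm{F}^{k-1}\mathrm{e}(X)\rightarrow \mathrm{G}^{k-1}(X)\rightarrow 0,\qquad k=1,\dots,n,$$
where $\mathrm{G}^{0}(X)=X$ and $\mathrm{G}^{n}(X)=0$.
5. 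Induct downward from $\mathrm{G}^{n-1}(X)\cong\mathrm{l}\mathrm{F}^{n-1}\mathrm{e}(X)\in[\mathrm{l}(U)]_{d+1}$. Each step places $\mathrm{G}^{k-1}(X)$ as the cokernel of a monomorphism from $\mathrm{G}^{k}(X)$ into a module of $[\mathrm{l}(U)]_{d+1}$.

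The expected obstacle is in step 5. Cokernels are not controlled by the $\bullet$ operation directly: one only gets $\mathrm{G}^{k-1}(X)\in[\mathrm{l}F^{k-1}e(X)]_1\bullet[\Omega^{-1}\mathrm{G}^{k}(X)]_1$. This is exactly what Lemma~\ref{lem-Cosyzygy3.11} and Lemma~\ref{lem-clft4.9} are for. Splice the sequences into a single exact sequence
$$0\rightarrow \mathrm{l}\mathrm{F}^{n-1}\mathrm{e}(X)\rightarrow\cdots\rightarrow \mathrm{l}\mathrm{F}\mathrm{e}(X)\rightarrow \mathrm{le}(X)\rightarrow X\rightarrow 0.$$
Lemma~\ref{lem-Cosyzygy3.11} then gives
$$X\in[\mathrm{le}(X)]_{1}\bullet[\Omega^{-1}\mathrm{l}\mathrm{F}\mathrm{e}(X)]_{1}\bullet\cdots\bullet[\Omega^{-(n-1)}\mathrm{l}\mathrm{F}^{n-1}\mathrm{e}(X)]_{1}.$$
By Lemma~\ref{lem-clft4.9}, each factor $[\Omega^{-k}\mathrm{lF}^{k}\mathrm{e}(X)]_1$ lies in $[\Omega^{-k}\mathrm{l}(U)\oplus I_k]_{d+1}$ for suitable injectives $I_k$. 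Since $\mathrm{inj}(R)=\mathrm{add}(DR)$, we may take $I_k=DR$.

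Set $W:=\bigoplus_{k=0}^{n-1}\Omega^{-k}\mathrm{l}(U)\oplus DR$. Then $X\in[W]_{d+1}^{\bullet n}=[W]_{n(d+1)}$. This shows $\mathrm{ext.dim}(\Omega^{j}(R\text{-}\mathrm{mod}))\leq n(d+1)-1=nd+(n-1)$.

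One point needs checking: that $W$ can be chosen independently of $X$. Lemma~\ref{lem-clft4.9} must be read as saying the cosyzygy shift is $\Omega^{-k}$ with $k$ the position in the spliced sequence, and the injective summand can always be enlarged to $DR$. I will verify this carefully from the dual of \cite[Lemma~4.7,~4.9]{MZL2026}.
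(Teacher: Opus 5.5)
Your proposal is correct and follows the paper's proof essentially step for step. For the left inequality you use $\mathrm{l}(Y)\in\Omega^{j}(R\text{-}\mathrm{mod})$, Lemma~\ref{lem:2.4} for $\mathrm{e}$, and the fact that $Y$ is a summand of $\mathrm{el}(Y)$; for the right inequality you splice the sequences of Lemma~\ref{lem-endofunctor2.10}, then apply Lemma~\ref{lem-Cosyzygy3.11} and Lemma~\ref{lem-clft4.9} (read as holding for every shift $t$) to land in $[\bigoplus_{t=0}^{n-1}\Omega^{-t}\mathrm{l}(U)\oplus\mathrm{D}(R)]_{n(d+1)}$. Your concern that $W$ might depend on $X$ is settled as in the paper: Lemma~\ref{lem-clft4.9} is always applied with the fixed module $\mathrm{l}(U)$, and all injective summands are absorbed into $\mathrm{D}(R)$.
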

\begin{proof}
We first show the right-hand side of the inequality.~If~$\mathrm{ext.dim}(\Omega^{j}(S\text{-}\mathrm{mod}))=\infty$,~then inequality holds trivially.~Assume $a:=
\mathrm{ext.dim}(\Omega^{j}(S\text{-}\mathrm{mod}))<\infty$.~Then there exists $N\in
S\text{-}\mathrm{mod}$ such that $\Omega^{j}(S\text{-}\mathrm{mod})\subseteq
[N]_{a+1}$.~Let~$X\in\Omega^{j}(R\text{-}\mathrm{mod})$.~Since~$\mathrm{F}^{n}=0$, by Lemma~\ref{lem-endofunctor2.10},~we obtain the following exact sequence in $R\text{-}\mathrm{mod}$:
\begin{align}\label{equality-endofunctor}
0\rightarrow\mathrm{l}\mathrm{F}^{n-1}\mathrm{e}(X)\rightarrow
\mathrm{l}\mathrm{F}^{n-2}\mathrm{e}(X)\rightarrow\cdots\rightarrow
\mathrm{lFe}(X)\rightarrow \mathrm{le}(X)\rightarrow X\rightarrow0.
\end{align}
Note that $\mathrm{el}\cong\mathrm{Id}_{S\text{-}\mathrm{mod}}\oplus \mathrm{F}$ and functors $\mathrm{l}$ and $\mathrm{e}$ preserve projectives,~this implies $\mathrm{F}$ preserves projectives.~Therefore,~by Lemma~\cite[Lemma~3.9]{MZL2026},~we obtain
$$\mathrm{F}^{t}\mathrm{e}(X)\in\Omega^{j}(S\text{-}\mathrm{mod})
\subseteq[N]_{a+1},~\forall~1\leq t\leq n-1.$$
Since $\mathrm{l}$ is an exact functor,~$\mathrm{l}\mathrm{F}^{t}\mathrm{e}(X)\in
[\mathrm{l}(N)]_{a+1}$ by~Lemma~\ref{lem:2.4}.~Combining (\ref{equality-endofunctor}) and Lemma~\ref{lem-Cosyzygy3.11},~we obtain that
\begin{align*}
X&\in[\mathrm{le}(X)]_{1}\bullet[\Omega^{-1}(\mathrm{l}\mathrm{F}\mathrm{e}(X))]_{1}
\bullet\cdots\bullet
[\Omega^{-(n-1)}(\mathrm{l}\mathrm{F}^{n-1}\mathrm{e}(X))]_{1}\\
&\subseteq[\mathrm{l}(N)]_{a+1}\bullet[\Omega^{-1}(\mathrm{l}(N))\oplus I_{1}]_{a+1}
\bullet\cdots\bullet
[\Omega^{-(n-1)}(\mathrm{l}(N))\oplus I_{n-1}]_{a+1}~\text{~~~~~(by~Lemma~\ref{lem-clft4.9})}\\
&\subseteq[\oplus^{n-1}_{t=0}\Omega^{-t}(\mathrm{l}(N))\oplus(I_{1}\oplus\cdots\oplus
I_{n-1})]_{n(a+1)}\\
&\subseteq[\oplus^{n-1}_{t=0}\Omega^{-t}(\mathrm{l}(N))\oplus\mathrm{D}(R)]_{n(a+1)},
\end{align*}
where $\mathrm{D}(-):=\mathrm{Hom}_{k}(-,E(k/\mathrm{rad}(k)))$ denotes the usual duality and satisfies $\mathrm{inj}(R)=\mathrm{add}(\mathrm{D}(R))$, $E(k/\mathrm{rad}(k))$ denotes the injective envelope of $k/\mathrm{rad}(k)$.~Hence,~$\mathrm{ext.dim}
(\Omega^{j}(R\text{-}\mathrm{mod}))\leq n(a+1)-1$.

On the other hand, we also may assume that~$b:=\mathrm{ext.dim}
(\Omega^{j}(R\text{-}\mathrm{mod}))<\infty$.~Then there exists $M\in R\text{-}\mathrm{mod}$ such that~$\Omega^{j}(R\text{-}\mathrm{mod})\subseteq[M]_{b+1}$.~Let~$Y
\in\Omega^{j}(S\text{-}\mathrm{mod})$.~Since~$\mathrm{l}$ and $\mathrm{e}$ are exact functors and~$\mathrm{l}$ preserves projectives,~by \cite[Lemma~3.9]{MZL2026},~we have
$\mathrm{el}(Y)\in[\mathrm{e}(M)]_{b+1}$.~Finally,~it follows by $\mathrm{el}\cong\mathrm{Id}_{S\text{-}\mathrm{mod}}\oplus \mathrm{F}$ that~$Y$ is a direct summand of $\mathrm{el}(Y)$,~and $Y\in[\mathrm{e}(M)]_{b+1}$.~Consequently,~$\mathrm{ext.dim}
(\Omega^{j}(S\text{-}\mathrm{mod}))\leq\mathrm{ext.dim}
(\Omega^{j}(R\text{-}\mathrm{mod}))$.
\end{proof}

\begin{rem}\label{rem-Cleft-syzygy-3.15}
{\rm Let $(S\text{-}\mathrm{mod},R\text{-}\mathrm{mod},\mathrm{i},\mathrm{e},\mathrm{l})$ be a cleft extension.~The following statements hold:\\
$(1)$~Note that $\mathrm{e}(M)$ and $\mathrm{F}(M)$ belong to $\Omega^{j}(\mathcal{P}^{<\infty}(S))$ for any $M\in\Omega^{j}(\mathcal{P}^{<\infty}(R))$  and any $j\geq0$ under the condition $\mathrm{e}$ preserves projectives.~Hence,
if $\mathrm{e}$ preserves projectives,~$\mathrm{l}$ is exact~and~$\mathrm{F}$ is nilpotent,~then, similar to the proof process of Theorem~\ref{thm-clft3.10}, one has
$$\mathrm{ext.dim}(\Omega^{j}(\mathcal{P}^{<\infty}(S)))\leq
\mathrm{ext.dim}(\Omega^{j}(\mathcal{P}^{<\infty}(R)))\leq
n\cdot\mathrm{ext.dim}(\Omega^{j}(\mathcal{P}^{<\infty}(S)))+(n-1).$$
In particular,~if~$\mathrm{fin.dim}(S)<\infty$, then~$\mathrm{ext.dim}(\Omega^{j}(\mathcal{P}^{<\infty}(R)))\leq
2n-1$~by~(\ref{equivelence-FDC}) in Sect.1.\\
(2)~If the induced endofunctor $\mathrm{F}=0$, then $\mathrm{el}\cong\mathrm{Id}_{S\text{-}\mathrm{mod}}$ by the isomorphism $\mathrm{el}\cong \mathrm{F}\oplus\mathrm{Id}_{S\text{-}\mathrm{mod}}$.~Moreover, by Lemma~\ref{lem-endofunctor2.10},~we have $\mathrm{G}=0$ and thereby $\mathrm{le}\cong
\mathrm{Id}_{R\text{-}\mathrm{mod}}$ by (\ref{exact-seq-endofunctor}).~Consequently,~$\mathrm{e}:R\text{-}\mathrm{mod}
\rightarrow S\text{-}\mathrm{mod}$ is an equivalent functor,~and~by~\cite[Theorem~5.4]{ZTS2025},~we get
$$\mathrm{ext.dim}(\Omega^{j}(R\text{-}\mathrm{mod}))=
\mathrm{ext.dim}(\Omega^{j}(S\text{-}\mathrm{mod})),~\forall~j\geq0.$$
}\end{rem}

\begin{lem}\label{lem:syzygy-equivalence}
Let $R=\prod^{2}_{i=1}R_{i}$ be product of rings.~Then there is an equivalence of categories for $j\geq0$:
\[
\Omega^{j}(R\text{-}\mathrm{mod})\simeq\Omega^{j}(R_{1}\text{-}\mathrm{mod})\times \Omega^{j}(R_{2}\text{-}\mathrm{mod}).
\]
\end{lem}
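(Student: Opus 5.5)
The plan is to use the standard equivalence $R\text{-}\mathrm{mod}\simeq R_{1}\text{-}\mathrm{mod}\times R_{2}\text{-}\mathrm{mod}$ and check that it restricts to the syzygy subcategories. Let $e_{1}=(1,0)$ and $e_{2}=(0,1)$ be the central orthogonal idempotents of $R$ with $e_{1}+e_{2}=1$. The functor $\Phi:R\text{-}\mathrm{mod}\rightarrow R_{1}\text{-}\mathrm{mod}\times R_{2}\text{-}\mathrm{mod}$, $M\mapsto(e_{1}M,e_{2}M)$, is an equivalence with quasi-inverse $(M_{1},M_{2})\mapsto M_{1}\oplus M_{2}$, where $R$ acts componentwise. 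Both functors are exact, since $e_{i}M$ is a direct summand of $M$ functorially and $M\cong e_{1}M\oplus e_{2}M$. They also preserve projectives: $e_{i}R=R_{i}$, so $\Phi(R)=(R_{1},R_{2})$, and a pair $(P_{1},P_{2})$ is projective in the product category exactly when each $P_{i}$ is projective over $R_{i}$, because $\mathrm{proj}(R_{1}\times R_{2})$ consists of direct summands of finite sums of $R=R_{1}\oplus R_{2}$.

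It therefore suffices to show that $M\in\Omega^{j}(R\text{-}\mathrm{mod})$ if and only if $e_{i}M\in\Omega^{j}(R_{i}\text{-}\mathrm{mod})$ for $i=1,2$. The case $j=0$ is trivial.

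For the forward direction, take an exact sequence $0\rightarrow M\rightarrow P_{j-1}\rightarrow\cdots\rightarrow P_{0}\rightarrow N\rightarrow0$ with each $P_{t}\in\mathrm{proj}(R)$. Multiplying by $e_{i}$ is exact, and each $e_{i}P_{t}$ is a projective $R_{i}$-module. Hence $e_{i}M$ is a $j$-th syzygy of $e_{i}N$ over $R_{i}$.

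For the converse, take $R_{i}$-resolutions exhibiting $e_{i}M$ as a $j$-th syzygy of some $N_{i}$. Their componentwise direct sum is an exact sequence of $R$-modules whose middle terms $Q^{1}_{t}\oplus Q^{2}_{t}$ are projective over $R$, since each $Q^{i}_{t}$ is a summand of some $R_{i}^{m}$, which is a summand of $R^{m}$. This exhibits $M\cong e_{1}M\oplus e_{2}M$ as a $j$-th syzygy of $N_{1}\oplus N_{2}$.

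Since $\Omega^{j}(R\text{-}\mathrm{mod})$ is closed under isomorphisms, as the paper assumes for all subcategories, $\Phi$ restricts to a fully faithful functor $\Omega^{j}(R\text{-}\mathrm{mod})\rightarrow\Omega^{j}(R_{1}\text{-}\mathrm{mod})\times\Omega^{j}(R_{2}\text{-}\mathrm{mod})$. It is essentially surjective by the converse direction, so it is an equivalence.

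The only point needing care is the convention that syzygies are defined only up to projective summands. The paper's definition takes $K\cong\mathrm{Im}(P_{n}\rightarrow P_{n-1})$ for some projective resolution. Both directions above produce genuine resolutions, so no Schanuel-type adjustment is needed; I therefore expect no real obstacle beyond this bookkeeping.
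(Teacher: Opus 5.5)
Your proposal is correct and follows essentially the same route as the paper: restrict the equivalence $X\mapsto(e_{1}X,e_{2}X)$ and its quasi-inverse $(Y_{1},Y_{2})\mapsto Y_{1}\oplus Y_{2}$ to the syzygy subcategories, using that both functors are exact and preserve projectives. The only difference is that the paper cites an external lemma, that exact, projective-preserving functors send $j$-th syzygies to $j$-th syzygies, whereas you verify this directly by applying $e_{i}$ to the truncated resolutions and by taking their direct sums.
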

\begin{proof}
Let~$e_{1}=(1,0)$ and~$e_{2}=(0,1)$ be the central idempotents of $R$ and $e_{1}+e_{2}=1_{R}$.~Then $X\cong
e_{1}X\oplus e_{2}X$ for any $X\in R\text{-}\mathrm{mod}$,~where each $e_{i}X$ carries a natural structure of a left $R_{i}$-module via the projection $R\rightarrow R_{i}$.~By \cite[Corollary~1.3.17]{BK2000}, there is an equivalence
\begin{align*}
\Phi\colon R\text{-}\mathrm{mod}\longrightarrow R_1\text{-}\mathrm{mod} \times R_2\text{-}\mathrm{mod},\quad X\mapsto (e_{1}X,e_{2}X),
\end{align*}
with quasi-inverse $\Psi\colon R_1\text{-}\mathrm{mod} \times R_2\text{-}\mathrm{mod}\longrightarrow
R\text{-}\mathrm{mod}$~satisfying $ G(Y_{1},Y_{2})=Y_{1}\oplus Y_{2}$,
where each $Y_{i}$ is regarded as an $R$-module via the projection $R\rightarrow R_{i}$. Indeed, $P_1\oplus P_2$ is a projective $R$-module for any $P_1\in\mathrm{proj}(R_1)$ and $P_2\in\mathrm{proj}(R_2)$,~and $e_{i}P$ is a projective $R_i$-module for any projective $R$-module $P$ and $1\leq i\leq 2$.~Hence,~it follows from~\cite[Lemma~3.9]{MZL2026} that the restricted functors
\begin{align*}
&\Phi\mid_{\Omega^{j}(R\text{-}\mathrm{mod})}:\Omega^{j}(R\text{-}\mathrm{mod})\rightarrow
\Omega^{j}(R_{1}\text{-}\mathrm{mod}) \times \Omega^{j}(R_{2}\text{-}\mathrm{mod}),\\
&\Psi\mid_{\Omega^{j}(R_1\text{-}\mathrm{mod})\times\Omega^{j}(R_2\text{-}\mathrm{mod})}:
\Omega^{j}(R_{1}\text{-}\mathrm{mod})\times\Omega^{j}(R_{2}\text{-}\mathrm{mod})
\rightarrow\Omega^{j}(R\text{-}\mathrm{mod})
\end{align*}
are well-defined. Since $\Phi$ and $\Psi$ are quasi-inverse on the whole category, their restrictions remain quasi-inverse on these full subcategories. Therefore the restricted $\Phi$ is an equivalence of categories.
\end{proof}

Let $\mathcal{X}_{1},\cdots,\mathcal{X}_{n}$ be subcategories of $R\text{-}\mathrm{mod}$.~By \cite[Sect.~1.1.11]{BK2000}, we form the product category $\mathcal{X}_{1}\times\cdots\times\mathcal{X}_{n}$ as follows. The objects of $\mathcal{X}_{1}\times\cdots\times\mathcal{X}_{n}$ are formed as $(X_{1},\cdots,X_{n})$,~where $X_{i}\in\mathcal{X}_{i}$ for $1\leq i\leq n$, and a morphism is formed as
$(f_{1},\cdots,f_{n}):(X_{1},\cdots,X_{n})\rightarrow(X^{'}_{1},\cdots,X^{'}_{n})$,
where $f_{i}:X_{i}\rightarrow X^{'}_{i}$.~Moreover,~the composition is given by the rule
$(f_{1},\cdots,f_{n})(g_{1},\cdots,g_{n})=(f_{1}g_{1},\cdots,f_{n}g_{n})$,
where $g_{i}:X^{''}_{i}\rightarrow X_{i}$.~Note the sequence
$$0\rightarrow(A_{1},\cdots,A_{n})\rightarrow(B_{1},\cdots,B_{n})
\rightarrow(C_{1},\cdots,C_{n})\rightarrow0$$
is exact in $\mathcal{X}_{1}\times\cdots\times\mathcal{X}_{n} $~if and only if the sequence
$0\rightarrow A_{i}\rightarrow B_{i}
\rightarrow C_{i}\rightarrow0$
is exact in $\mathcal{X}_{i}$ for any $1\leq i\leq n$.

\begin{lem}\label{lemma-exa-5.6}
Let $\mathcal{X}_{1},\cdots,\mathcal{X}_{n}$ be subcategories of $R\text{-}\mathrm{mod}$, and let $n\geq1$.~Then\\
$(1)$~$[(N_{1},\cdots,N_{n})]_{t}=
[N_{1}]_{t}\times\cdots\times[N_{n}]_{t}$~holds for any $N_{i}\in\mathcal{X}_{i}$ and $t\geq0$,~where $1\leq i\leq n$;\\
$(2)$~$\mathrm{ext.dim}(\mathcal{X}_{1}\times\cdots\times\mathcal{X}_{n})=
\mathrm{max}\{\mathrm{ext.dim}(\mathcal{X}_{i})\mid 1\leq i\leq n\}$.
\end{lem}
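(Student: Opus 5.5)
Part (1) is proved by induction on $t$, working componentwise in the product category, where direct sums, direct summands and short exact sequences are all taken coordinatewise. For $t=0$ both sides consist of the zero object, since $(X_1,\dots,X_n)\cong 0$ exactly when every $X_i\cong 0$. For $t=1$ we compare $\mathrm{add}$. A direct summand of a finite sum of copies of $(N_1,\dots,N_n)$ has each coordinate a summand of a sum of copies of $N_i$, which gives $\subseteq$. Conversely, given $(X_1,\dots,X_n)$ with $X_i$ a summand of $N_i^{m_i}$, set $m=\max_i m_i$. Then $(X_1,\dots,X_n)$ is a summand of $(N_1,\dots,N_n)^{m}$, with complement given coordinatewise by the complements of $X_i$ in $N_i^{m}$. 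This gives $\supseteq$.

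For the inductive step we use $[\,\cdot\,]_{t}=\mathrm{add}([\,\cdot\,]_{t-1}\ast[\,\cdot\,]_1)$. Suppose $(X_i)$ is a summand of some $(E_i)$ sitting in an exact sequence $0\to(A_i)\to(E_i)\to(B_i)\to0$ with $(A_i)\in[(N_i)]_{t-1}$ and $(B_i)\in[(N_i)]_1$. By exactness in the product and the induction hypothesis, each $X_i$ is a summand of $E_i\in[N_i]_{t-1}\ast[N_i]_1$, so $X_i\in[N_i]_t$. Conversely, if each $X_i$ is a summand of such an $E_i$ with its own sequence $0\to A_i\to E_i\to B_i\to0$, then assembling these sequences coordinatewise gives an exact sequence in the product. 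Its end terms lie in $[N_1]_{t-1}\times\cdots\times[N_n]_{t-1}=[(N_i)]_{t-1}$ and in $[(N_i)]_1$ respectively, and $(X_i)$ is a summand of $(E_i)$. The one point needing care is that the definition of $[\,\cdot\,]_t$ in the product is read with $\mathrm{add}$ and extensions computed in the product category, where the summand and extension structures are exactly the coordinatewise ones; this is what makes both inclusions go through.

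For (2), put $d=\max_i\mathrm{ext.dim}(\mathcal{X}_i)$ and assume $d<\infty$, since otherwise the bound $\le$ is vacuous. Choose $N_i$ with $\mathcal{X}_i\subseteq[N_i]_{d_i+1}$, where $d_i=\mathrm{ext.dim}(\mathcal{X}_i)$. Since $[N_i]_{d_i+1}\subseteq[N_i]_{d+1}$ (the layers increase, because $0$ lies in $[N_i]_1$), part (1) gives $\mathcal{X}_1\times\cdots\times\mathcal{X}_n\subseteq[(N_1,\dots,N_n)]_{d+1}$, hence $\le$. For $\ge$, suppose $\mathcal{X}_1\times\cdots\times\mathcal{X}_n\subseteq[(N_1,\dots,N_n)]_{m+1}$ for some generator. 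Because subcategories are additive they contain $0$, so each $X\in\mathcal{X}_i$ appears as the $i$-th coordinate of $(0,\dots,X,\dots,0)$. Part (1) then yields $X\in[N_i]_{m+1}$, so $\mathrm{ext.dim}(\mathcal{X}_i)\le m$ for every $i$. The same argument shows that if the product has finite extension dimension then so does each $\mathcal{X}_i$, which settles the infinite case. Here the generator ranges over objects of the ambient product of module categories, so every generator has the form $(N_1,\dots,N_n)$ and part (1) applies to it.
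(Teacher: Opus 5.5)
Your proof is correct and follows the same route as the paper: induction on $t$ for part (1), using that isomorphisms, direct summands and short exact sequences in the product category are computed coordinatewise, and then both inequalities in part (2) deduced from part (1). You are slightly more careful than the paper at several points it leaves implicit: the $t=0$ case, padding to a common multiplicity $m=\max_i m_i$ in the base case, monotonicity $[N_i]_{d_i+1}\subseteq[N_i]_{d+1}$, using $0\in\mathcal{X}_j$ to isolate a single coordinate, and handling the infinite case.
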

\begin{proof}
(1)~The proof proceeds by induction on $t$. For $t=1$.~Let~$(X_{1},\cdots,X_{n})
\in[(N_{1},\cdots,N_{n})]_{1}$.~Then there exists~$(X^{'}_{1},\cdots,X^{'}_{n})\in
R\text{-}\mathrm{mod}\times\cdots\times R\text{-}\mathrm{mod}$ such that
$$(X_{1},\cdots,X_{n})\oplus(X^{'}_{1},\cdots,X^{'}_{n})\cong
(N^{(k)}_{1},\cdots,N^{(k)}_{n})$$
for some integer $k$.~Therefore,~$X_{i}\in[N_{i}]_{1}$, and
$$(X_{1},\cdots,X_{n})\in[N_{1}]_{1}\times\cdots\times[N_{n}]_{1}.$$
Conversely,~let~$(X_{1},\cdots,X_{n})\in[N_{1}]_{1}\times\cdots\times
[N_{n}]_{1}$.~Then~$X_{i}\in
[N_{i}]_{1}$~for~$1\leq i\leq n$.~Hence there exist~$X^{'}_{i}\in R\text{-}\mathrm{mod}$ such that
$$(X_{1},\cdots,X_{n})\oplus(X^{'}_{1},\cdots,X^{'}_{n})
\cong(N^{(l_{1})}_{1},\cdots,N^{(l_{n})}_{n})$$
for some integers $l_{1},\cdots,l_{n}$.~Hence,~$(X_{1},\cdots,X_{n})
\in[(N_{1},\cdots,N_{n})]_{1}$.

Now assume~$t\geq1$~and $(X_{1},\cdots,X_{n})\in[(N_{1},\cdots,N_{n})]_{t}$.~Then, by inductive hypothesis, there exists the following exact sequence
\begin{align}\label{eqno:product-ring}
0\rightarrow(A_{1},\cdots,A_{n})\rightarrow
(X_{1},\cdots,X_{n})\oplus(X^{'}_{1},\cdots,X^{'}_{n})\rightarrow
(B_{1},\cdots,B_{n})\rightarrow0,
\end{align}
with $A_{i}\in[N_{i}]_{t-1},~B_{i}\in[N_{i}]_{1}$.~Hence~$(X_{1},\cdots,X_{n})\in
[N_{1}]_{t}\times\cdots\times[N_{n}]_{t}$.~Conversely,~let~$(X_{1},\cdots,X_{n})\in
[N_{1}]_{t}\times\cdots\times[N_{n}]_{t}$.~Then there exists the exact sequence
$$0\rightarrow A_{i}\rightarrow X_{i}\oplus X^{'}_{i}\rightarrow B_{i}
\rightarrow0$$
such that $A_{i}\in[N_{i}]_{t-1},~B_{i}\in[N_{i}]_{1}$~for $1\leq i\leq n$.~Hence we re-obtain the exact sequence (\ref{eqno:product-ring}),~and~by inductive hypothesis we obtain that
\begin{align*}
&(A_{1},\cdots,A_{n})\in[N_{1}]_{t-1}\times\cdots\times[N_{n}]_{t-1}
=[(N_{1},\cdots,N_{n})]_{t-1},\\
&(B_{1},\cdots,B_{n})\in[N_{1}]_{1}\times\cdots\times[N_{n}]_{1}
=[(N_{1},\cdots,N_{n})]_{1}.
\end{align*}
Hence~$(X_{1},\cdots,X_{n})\in[(N_{1},\cdots,N_{n})]_{t}$.

(2)~Assume~$a:=\mathrm{ext.dim}(\mathcal{X}_{1}\times\cdots\times\mathcal{X}_{n})$.~Then there exist~$N_{1},\cdots,N_{n}\in R\text{-}\mathrm{mod}$ such that
$$\mathcal{X}_{1}\times\cdots\times\mathcal{X}_{n}\subseteq[(N_{1},\cdots,N_{n})]_{a+1}
=[N_{1}]_{a+1}\times\cdots\times
[N_{n}]_{a+1}.$$
Thus $\mathcal{X}_{i}\subseteq[N_{i}]_{a+1}$, and~$\mathrm{ext.dim}(\mathcal{X}_{i})
\leq a$ for all $1\leq i\leq n$.~Next,~assume~$b:=
\mathrm{max}\{\mathrm{ext.dim}(\mathcal{X}_{i})\mid 1\leq i
\leq n\}$.~Then there exist $M_{i}\in R\text{-}\mathrm{mod}$ such that
$\mathcal{X}_{i}\subseteq[M_{i}]_{b+1}$.~Let~$(X_{1},\cdots,X_{n})
\in\mathcal{X}_{1}\times\cdots\times\mathcal{X}_{n}$.~Then, by (1),~we have
$$(X_{1},\cdots,X_{n})\in[M_{1}]_{b+1}\times\cdots\times
[M_{n}]_{b+1}=[(M_{1},\cdots,M_{n})]_{b+1}.$$
Hence~$\mathrm{ext.dim}(\mathcal{X}_{1}\times\cdots\times\mathcal{X}_{n})\leq b$.
\end{proof}

Now, we apply Theorem \ref{thm-clft3.10} in the following result.

\begin{cor}
{\rm Let~$\Lambda:=\begin{psmallmatrix}
R & _{R}N_{S} \\
_{S}M_{R} & S
\end{psmallmatrix}$ be a Morita context ring which is an Artin algebra.~If $N\otimes_{S}M=M\otimes_{R}N=0$,~$_{R}N_{S}$ and $_{S}M_{R}$ are projective bimodules,~then\\
$(1)$~$\mathrm{ext.dim}(\Omega^{j}((R\times S)\text{-}\mathrm{mod}))\leq
\mathrm{ext.dim}(\Omega^{j}(\Lambda\text{-}\mathrm{mod}))$;\\
$(2)$~$\mathrm{ext.dim}(\Omega^{j}(\Lambda\text{-}\mathrm{mod}))\leq
2\mathrm{max}\{\mathrm{ext.dim}(\Omega^{j}(R\text{-}\mathrm{mod})),
\mathrm{ext.dim}(\Omega^{j}(S\text{-}\mathrm{mod}))\}+1$.
}
\end{cor}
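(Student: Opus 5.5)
We realize the Morita context ring $\Lambda$ as a cleft extension of $(R\times S)\text{-}\mathrm{mod}$ and then apply Theorem~\ref{thm-clft3.10} with $n=2$, together with Lemmas~\ref{lem:syzygy-equivalence} and~\ref{lemma-exa-5.6}. A $\Lambda$-module is a quadruple $(X,Y,f,g)$ with $X\in R\text{-}\mathrm{mod}$, $Y\in S\text{-}\mathrm{mod}$, $f:N\otimes_S Y\to X$ and $g:M\otimes_R X\to Y$, subject to the compatibility conditions. Since $N\otimes_S M=M\otimes_R N=0$, these conditions are vacuous, and $\Lambda\cong (R\times S)\ltimes (M\oplus N)$ is a trivial extension by the $(R\times S)$-bimodule $M\oplus N$. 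We use the standard cleft extension attached to this trivial extension.
\begin{itemize}
\item $\mathrm{e}(X,Y,f,g)=(X,Y)$ forgets the structure maps.
\item $\mathrm{i}(X,Y)=(X,Y,0,0)$.
\item $\mathrm{l}=\Lambda\otimes_{R\times S}-$, so $\mathrm{l}(X,Y)=(X\oplus N\otimes_S Y,\ Y\oplus M\otimes_R X,\ldots)$ with the canonical maps.
\end{itemize}
Then $\mathrm{e}$ is faithful and exact, $(\mathrm{l},\mathrm{e})$ is an adjoint pair, and $\mathrm{ei}=\mathrm{Id}$. The induced endofunctor is $\mathrm{F}(X,Y)=(N\otimes_S Y,\ M\otimes_R X)$, so $\mathrm{F}^2(X,Y)=(N\otimes_S M\otimes_R X,\ M\otimes_R N\otimes_S Y)=0$.

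Next we check the hypotheses of Theorem~\ref{thm-clft3.10}.

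First, $\mathrm{l}$ is exact. The bimodules $M$ and $N$ are projective, so in particular $N_S$ and $M_R$ are projective, hence flat. It follows that $N\otimes_S-$ and $M\otimes_R-$ are exact, and therefore so is $\mathrm{l}$.

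Second, $\mathrm{e}$ preserves projectives. The indecomposable projective $\Lambda$-modules are summands of $\Lambda e_1=\mathrm{l}(R,0)=(R,M)$ and $\Lambda e_2=\mathrm{l}(0,S)=(N,S)$. Applying $\mathrm{e}$ gives $(R\oplus 0,\ M)$ and $(N,\ S)$ as $R\times S$-modules. These are projective because ${}_SM$ and ${}_RN$ are projective, which is where the left-module half of the bimodule projectivity is used.

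This verification is the main obstacle. The key point is that ``projective bimodule'' must be read as giving projectivity on both sides. If the intended meaning is projectivity over $R\otimes S^{op}$ (respectively $S\otimes R^{op}$), this still implies one-sided projectivity, since $R$ and $S$ are themselves projective over $k$-related tensor factors. That implication is standard for Artin algebras when the bimodule is a summand of free bimodules, but it may need a remark.

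With the hypotheses verified, Theorem~\ref{thm-clft3.10} for $n=2$ yields
$$\mathrm{ext.dim}(\Omega^{j}((R\times S)\text{-}\mathrm{mod}))\le \mathrm{ext.dim}(\Omega^{j}(\Lambda\text{-}\mathrm{mod}))\le 2\,\mathrm{ext.dim}(\Omega^{j}((R\times S)\text{-}\mathrm{mod}))+1.$$
The first inequality is exactly (1). For (2), Lemma~\ref{lem:syzygy-equivalence} gives $\Omega^{j}((R\times S)\text{-}\mathrm{mod})\simeq\Omega^{j}(R\text{-}\mathrm{mod})\times\Omega^{j}(S\text{-}\mathrm{mod})$. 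This equivalence is exact and compatible with direct sums, so it preserves the $[\,\cdot\,]_t$ constructions and hence extension dimension. Lemma~\ref{lemma-exa-5.6}(2) then identifies $\mathrm{ext.dim}(\Omega^{j}((R\times S)\text{-}\mathrm{mod}))$ with $\max\{\mathrm{ext.dim}(\Omega^{j}(R\text{-}\mathrm{mod})),\mathrm{ext.dim}(\Omega^{j}(S\text{-}\mathrm{mod}))\}$. Substituting into the right-hand inequality gives (2).
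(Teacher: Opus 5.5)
Your proposal is correct and follows essentially the same route as the paper: you realize $\Lambda$ as a cleft extension of $(R\times S)\text{-}\mathrm{mod}$ with $\mathrm{F}(X,Y)=(N\otimes_{S}Y,M\otimes_{R}X)$, check $\mathrm{F}^{2}=0$, the exactness of $\mathrm{l}$ and that $\mathrm{e}$ preserves projectives, apply Theorem~\ref{thm-clft3.10} with $n=2$, and finish with Lemmas~\ref{lem:syzygy-equivalence} and~\ref{lemma-exa-5.6}. Your explicit check that $\mathrm{e}$ preserves projectives uses the projectivity of ${}_{S}M$ and ${}_{R}N$, and usefully fills in what the paper calls obvious; however, your aside that projectivity over $R\otimes_{k}S^{\mathrm{op}}$ automatically gives one-sided projectivity needs $R,S$ to be $k$-projective (e.g.\ $k$ a field) and fails in general (take $k=R=\mathbb{Z}/4\mathbb{Z}$, $S=\mathbb{Z}/2\mathbb{Z}$, $N=R\otimes_{k}S$), so the hypothesis should be read, as the paper implicitly does, as projectivity on each side.
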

\begin{proof}
It is well-known that $\Lambda\text{-}\mathrm{mod}$
is equivalent to a category whose objects are tuples $(X,Y,f,g)$,~where $X\in R\text{-}\mathrm{mod}$,~$Y\in S\text{-}\mathrm{mod}$,~$f:X\otimes_{R}N\rightarrow Y$,~$g:Y\otimes_{S}M\rightarrow X$.~Since~$\Lambda$~is an Artin algebra,~by \cite[Proposition~7.5]{Beligiannis2000} and~\cite[Remark~2.9]{MZL2026}, there is a cleft extension
$$\xymatrix@R=1.8cm@C=1.8cm{
(R\times S)\text{-}\mathrm{mod}
    \ar^-{\mathrm{i}}[r]
& \Lambda\text{-}\mathrm{mod}
    \ar^-{\mathrm{e}}[r]
& (R\times S)\text{-}\mathrm{mod}
    \ar_-{\mathrm{l}}@/_1.8pc/[l] },$$
and functors $\mathrm{e}$,~$\mathrm{l}$ and $\mathrm{F}$ satisfy the following condition:
\begin{align*} 
&\mathrm{e}(X,Y,f,g)=(X,Y);\\
&\mathrm{l}(X,Y)=(X,M\otimes_{R}X,0,1)\oplus(N\otimes_{S}Y,Y,1,0);\\
&\mathrm{F}(X,Y)=(N\otimes_{S}Y,M\otimes_{R}X).
\end{align*}
Obviously,~$\mathrm{e}$ preserves projectives and~$N\otimes_{S}M=M\otimes_{R}N=0$~implies that~$\mathrm{F}^{2}=0$.~Moreover,~the one-side projective properties of~$_{R}N_{S}$ and $_{S}M_{R}$ imply that~$\mathrm{l}$ is an exact functor.~Hence, by Theorem \ref{thm-clft3.10}, we obtain
\begin{align*}
\mathrm{ext.dim}(\Omega^{j}((R\times S)\text{-}\mathrm{mod}))&\leq
\mathrm{ext.dim}(\Omega^{j}(\Lambda\text{-}\mathrm{mod}))\\
&\leq2\mathrm{ext.dim}(\Omega^{j}((R\times S)\text{-}\mathrm{mod}))+1\\
&=2\mathrm{max}\{\mathrm{ext.dim}(\Omega^{j}(R\text{-}\mathrm{mod})),
\mathrm{ext.dim}(\Omega^{j}(S\text{-}\mathrm{mod}))\}+1,
\end{align*}
the last equality follows by Lemma~\ref{lem:syzygy-equivalence} and \ref{lemma-exa-5.6}.
\end{proof}

\section{Local-global properties for extension dimensions}

In this section, we always assume that~$R$ is a commutative Artin ring.~We will study the behavior of extension dimensions under localization.

\begin{lem}\label{Artin-4.1}
{\rm(\cite[Proposition~6.3.4,~Exercise~7.18]{Singh2011})}
Let~$R$ be an Artin ring.~Then $\mathrm{Max}(R)$ is finite and
$R\cong \prod^{n}_{i=1}R_{\mathfrak{m}_{i}}$~as rings,~where~$\mathfrak{m}_{1},\cdots,\mathfrak{m}_{n}$~are all maximal ideals of $R$.
\end{lem}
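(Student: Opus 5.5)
The statement is a standard structure theorem for commutative Artin rings, and I would prove it in three steps: show $\mathrm{Max}(R)$ is finite, obtain a product decomposition via the Chinese Remainder Theorem, and identify each factor with a localization.

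\emph{Step 1: finiteness of $\mathrm{Max}(R)$.} Suppose $\mathfrak{m}_1,\mathfrak{m}_2,\dots$ are distinct maximal ideals. Then the chain
$$\mathfrak{m}_1\supseteq \mathfrak{m}_1\cap\mathfrak{m}_2\supseteq \mathfrak{m}_1\cap\mathfrak{m}_2\cap\mathfrak{m}_3\supseteq\cdots$$
is descending. It is strict, since $\mathfrak{m}_1\cdots\mathfrak{m}_{k}\not\subseteq\mathfrak{m}_{k+1}$ by primality and maximality, and $\mathfrak{m}_1\cdots\mathfrak{m}_{k}\subseteq\mathfrak{m}_1\cap\cdots\cap\mathfrak{m}_k$. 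The Artinian condition forces the chain to stop, so $\mathrm{Max}(R)=\{\mathfrak{m}_1,\dots,\mathfrak{m}_n\}$ is finite. Moreover, every prime of an Artin ring is maximal, because $R/\mathfrak{p}$ is an Artinian domain and hence a field; so $\mathrm{Spec}(R)=\mathrm{Max}(R)$.

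\emph{Step 2: nilpotence of the Jacobson radical.} Let $J=\bigcap_i\mathfrak{m}_i$, which equals the nilradical by Step 1. The chain $J\supseteq J^2\supseteq\cdots$ stabilizes, say $J^k=J^{k+1}=:I$. If $I\neq 0$, pick an ideal $\mathfrak{a}$ minimal among those with $\mathfrak{a}I\neq0$. Then $\mathfrak{a}=(x)$ is principal, and $xJ=(x)$, so Nakayama's lemma gives $x=0$, a contradiction. Hence $J^k=0$, and therefore $\prod_i\mathfrak{m}_i^{k}\subseteq J^k=0$.

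\emph{Step 3: decomposition.} The ideals $\mathfrak{m}_i^k$ are pairwise comaximal, since their radicals $\mathfrak{m}_i$ are distinct maximal ideals. The Chinese Remainder Theorem then gives
$$R\cong R/\textstyle\prod_i\mathfrak{m}_i^k\cong\prod_{i=1}^n R/\mathfrak{m}_i^k.$$
Each $R/\mathfrak{m}_i^k$ is local with unique maximal ideal $\mathfrak{m}_i/\mathfrak{m}_i^k$. To identify it with $R_{\mathfrak{m}_i}$, I would use the primitive idempotents $e_i$ of the decomposition. Localizing the product at $\mathfrak{m}_i$ kills every factor $j\neq i$, because $e_j\notin\mathfrak{m}_i$ while $e_je_i=0$. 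It leaves the factor $R/\mathfrak{m}_i^k$ unchanged, since that ring is already local with elements outside $\mathfrak{m}_i$ acting invertibly. Therefore $R_{\mathfrak{m}_i}\cong R/\mathfrak{m}_i^k$, and $R\cong\prod_i R_{\mathfrak{m}_i}$ as rings.

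I expect the main obstacle to be Step 2, the nilpotence of $J$ via the minimality argument. The remaining steps are formal.
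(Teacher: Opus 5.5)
The paper gives no proof of this lemma; it only cites Singh (Proposition~6.3.4 and Exercise~7.18). Your three steps are the standard textbook argument: finitely many maximal ideals, nilpotence of the Jacobson radical, the Chinese Remainder Theorem, and identification of the factors with localizations, as in Atiyah--Macdonald, Chapter~8. Steps~1 and~2 are fine. In Step~2 you should record two small points:
\begin{itemize}
\item the set of ideals $\mathfrak{a}$ with $\mathfrak{a}I\neq 0$ is nonempty, because $I^2=J^{2k}=I\neq0$;
\item $xJ=(x)$ follows from minimality, because $(xJ)I=x(JI)=xI\neq 0$.
\end{itemize}

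There is, however, an index slip in Step~3 that makes the stated justification false. For $j\neq i$ the idempotent $e_j$ \emph{does} lie in $\mathfrak{m}_i$. Under $R\cong\prod_l R/\mathfrak{m}_l^k$, the ideal $\mathfrak{m}_i$ corresponds to the tuples whose $i$-th coordinate lies in $\mathfrak{m}_i/\mathfrak{m}_i^k$, and $e_j$ has $i$-th coordinate $0$. Taken literally, your reasoning gives the wrong conclusion: if $e_j$ were inverted in $R_{\mathfrak{m}_i}$, then $e_je_i=0$ would force $e_i=0$ there, killing the very factor that should survive.

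The correct fact is that $e_i\notin\mathfrak{m}_i$ (equivalently $1-e_j\notin\mathfrak{m}_i$). This element annihilates $e_jR\cong R/\mathfrak{m}_j^k$, so $(e_jR)_{\mathfrak{m}_i}=0$. You can also avoid idempotents: since $\mathfrak{m}_j^k\not\subseteq\mathfrak{m}_i$, some $s\in\mathfrak{m}_j^k\setminus\mathfrak{m}_i$ annihilates $R/\mathfrak{m}_j^k$.

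With that index corrected, the rest of Step~3 goes through. The surviving factor $R/\mathfrak{m}_i^k$ is local, so elements outside $\mathfrak{m}_i$ already act invertibly on it and $(R/\mathfrak{m}_i^k)_{\mathfrak{m}_i}\cong R/\mathfrak{m}_i^k$. This gives $R\cong\prod_i R_{\mathfrak{m}_i}$ as rings.
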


Next,~we present a lemma needed for the local-global properties of extension dimension.

\begin{lem}\label{lem-local-4.1}
Let~$M, N\in R\text{-}\mathrm{mod}$ and let $n \in \mathbb{N}$. Then the following statements are equivalent:\\
$(1)$~\( N \in [M]_n \);\\
$(2)$~$N_{\mathfrak{p}} \in [M_{\mathfrak{p}}]_n$ for every $\mathfrak{p} \in \mathrm{Spec}(R)$;\\
$(3)$~$N_{\mathfrak{m}} \in [M_{\mathfrak{m}}]_n$ for every $\mathfrak{m} \in \mathrm{Max}(R)$.
\end{lem}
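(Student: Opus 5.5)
We prove the cycle $(1)\Rightarrow(2)\Rightarrow(3)\Rightarrow(1)$. The implication $(2)\Rightarrow(3)$ is trivial, since every maximal ideal is prime.

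For $(1)\Rightarrow(2)$, fix $\mathfrak{p}\in\mathrm{Spec}(R)$ and consider the localization functor $(-)_{\mathfrak{p}}=R_{\mathfrak{p}}\otimes_{R}-:R\text{-}\mathrm{mod}\rightarrow R_{\mathfrak{p}}\text{-}\mathrm{mod}$. Since $R_{\mathfrak{p}}$ is a flat $R$-module, this functor is exact, and it sends finitely generated modules to finitely generated $R_{\mathfrak{p}}$-modules. Lemma~\ref{lem:2.4} then gives $N_{\mathfrak{p}}\in[M]_{n}{}_{\mathfrak{p}}\subseteq[M_{\mathfrak{p}}]_{n}$. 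The case $n=0$ is trivial, because localization preserves zero modules.

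For $(3)\Rightarrow(1)$, use Lemma~\ref{Artin-4.1}: $\mathrm{Max}(R)=\{\mathfrak{m}_{1},\dots,\mathfrak{m}_{r}\}$ is finite and $R\cong\prod_{i=1}^{r}R_{\mathfrak{m}_{i}}$.

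\emph{Step 1: identify the localizations with components.} The equivalence of Lemma~\ref{lem:syzygy-equivalence} (its $j=0$ case, extended to finitely many factors by induction or by repeating the argument with the central idempotents $e_{1},\dots,e_{r}$) sends $X\mapsto(e_{1}X,\dots,e_{r}X)$. Here $e_{i}X\cong X_{\mathfrak{m}_{i}}$: localizing at $\mathfrak{m}_{i}$ inverts $e_{i}$ and kills $e_{k}$ for $k\neq i$, since $e_{k}e_{i}=0$ and $e_{i}\notin\mathfrak{m}_{i}$. So $N$ corresponds to $(N_{\mathfrak{m}_{1}},\dots,N_{\mathfrak{m}_{r}})$ and $M$ to $(M_{\mathfrak{m}_{1}},\dots,M_{\mathfrak{m}_{r}})$.

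\emph{Step 2: transfer across the equivalence.} An equivalence of abelian categories preserves direct sums, summands and short exact sequences, so it preserves the subcategories $[-]_{n}$. Hypothesis (3) together with Lemma~\ref{lemma-exa-5.6}(1) gives
$$(N_{\mathfrak{m}_{1}},\dots,N_{\mathfrak{m}_{r}})\in[M_{\mathfrak{m}_{1}}]_{n}\times\cdots\times[M_{\mathfrak{m}_{r}}]_{n}=[(M_{\mathfrak{m}_{1}},\dots,M_{\mathfrak{m}_{r}})]_{n}.$$
Lemma~\ref{lemma-exa-5.6} is phrased for subcategories of a single $R\text{-}\mathrm{mod}$, but its proof is purely componentwise and works verbatim for $R_{\mathfrak{m}_{1}}\text{-}\mathrm{mod}\times\cdots\times R_{\mathfrak{m}_{r}}\text{-}\mathrm{mod}$. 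Pulling back along the quasi-inverse $\Psi$ yields $N\in[M]_{n}$.

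The main obstacle is the bookkeeping in Step 1. One must check that the product decomposition of Lemma~\ref{Artin-4.1} is compatible with localization, i.e.\ $e_{i}X\cong X_{\mathfrak{m}_{i}}$ naturally as $R_{\mathfrak{m}_{i}}$-modules. One must also check that $[-]_{n}$ is invariant under the equivalence $\Phi$; this follows by induction on $n$ from exactness and additivity of $\Phi$ and $\Psi$.

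An alternative that avoids the equivalence is to write $N\cong\bigoplus_{i}N_{\mathfrak{m}_{i}}$ as $R$-modules, regarding each $N_{\mathfrak{m}_{i}}$ as an $R$-module via restriction along the projection $R\to R_{\mathfrak{m}_{i}}$. Restriction of scalars along this surjection is exact, so Lemma~\ref{lem:2.4} gives $N_{\mathfrak{m}_{i}}\in[M_{\mathfrak{m}_{i}}]_{n}\subseteq[M]_{n}$ in $R\text{-}\mathrm{mod}$, because $M_{\mathfrak{m}_{i}}$ is an $R$-direct summand of $M$. Since $[M]_{n}$ is closed under finite direct sums, it follows that $N\in[M]_{n}$.
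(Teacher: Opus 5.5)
Your proof is correct. The implications $(1)\Rightarrow(2)\Rightarrow(3)$ are handled exactly as in the paper. For $(3)\Rightarrow(1)$ the route differs.

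\textbf{The paper's argument.} The paper also starts from $R\cong\prod_i R_{\mathfrak{m}_i}$ together with $N\cong\bigoplus_i N_{\mathfrak{m}_i}$ and $M\cong\bigoplus_i M_{\mathfrak{m}_i}$. It then argues by a hands-on induction on $n$. For $n=1$ it glues the local splittings $N_{\mathfrak{m}_i}\oplus X^{\mathfrak{m}_i}\cong M_{\mathfrak{m}_i}^{(t_i)}$, padding with $M_{\mathfrak{m}_i}^{(t-t_i)}$ to reach a common exponent $t=\max t_i$. In the inductive step it takes the direct sum over $i$ of the local sequences $0\to A^{\mathfrak{m}_i}\to N_{\mathfrak{m}_i}\oplus Z^{\mathfrak{m}_i}\to B^{\mathfrak{m}_i}\to 0$. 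This produces one exact sequence in $R\text{-}\mathrm{mod}$, whose outer terms land in $[M]_{n-1}$ and $[M]_1$.

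\textbf{Your main route.} You package this same gluing into the equivalence $R\text{-}\mathrm{mod}\simeq\prod_i R_{\mathfrak{m}_i}\text{-}\mathrm{mod}$ together with Lemma~\ref{lemma-exa-5.6}(1). The proof of that lemma is precisely the componentwise induction the paper does by hand. This costs you the extra checks you list yourself: extending Lemma~\ref{lem:syzygy-equivalence} to $r$ factors, the identification $e_iX\cong X_{\mathfrak{m}_i}$, and invariance of $[-]_n$ under an exact equivalence. In return, it shows the lemma is an instance of the product formula the paper uses later for Example~\ref{Exa-local-5.8}.

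\textbf{Your alternative.} This is the most economical of the three. You apply Lemma~\ref{lem:2.4} to the exact restriction functor along the surjection $R\to R_{\mathfrak{m}_i}$. You then use that ${}_R(M_{\mathfrak{m}_i})\cong e_iM\in\mathrm{add}(M)$, so $[M_{\mathfrak{m}_i}]_n\subseteq[M]_n$, and that $[M]_n$ is closed under finite direct sums. This gives $N\in[M]_n$ with no explicit induction on $n$ (it is hidden inside Lemma~\ref{lem:2.4}). It also avoids assembling global exact sequences and common multiplicities by hand, which is the part of the paper's argument where the bookkeeping is most delicate.
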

\begin{proof}
From the exactness of~$R_{\mathfrak{p}}\otimes_{R}-$ together with Lemma~\ref{lem:2.4}, we have that $(1)\Rightarrow(2)\Rightarrow(3)$.

$(3)\Rightarrow(1)$ Assume~$N_{\mathfrak{m}} \in [M_{\mathfrak{m}}]_{n}$ for every~$\mathfrak{m}\in\mathrm{Max}(R)$. The proof proceeds by induction on
$n$.~Since~$R$ is an Artin ring,~one has~$R\cong\bigoplus^{a}_{i=1}R_{\mathfrak{m}_{i}}$~by Lemma~\ref{Artin-4.1},~where $\mathfrak{m}_{1},\cdots,\mathfrak{m}_{a}$ are all maximal ideals of $R$.~For~$n=1$, there exist~$X^{\mathfrak{m}_{i}}\in R_{\mathfrak{m}_{i}}\text{-}\mathrm{mod}$ and~$t_{i}\in\mathbb{N}$ such that
\begin{align}\label{Eq-Lem4-1}
N_{\mathfrak{m}_{i}} \oplus X^{\mathfrak{m}_{i}}\cong (M_{\mathfrak{m}_{i}})^{(t_{i})}
 \cong (M^{(t_{i})})_{\mathfrak{m}_{i}}
\end{align}
holds for $1\leq i\leq a$.~Set~$t:=\mathrm{max}\{t_{i}\mid1\leq i\leq a\}$.~Then we obtain an exact sequence in~$R\text{-}\mathrm{mod}$
$$0\rightarrow (\oplus^{a}_{i=1}N_{\mathfrak{m}_{i}})
\oplus(\oplus^{a}_{i=1}X^{\mathfrak{m}_{i}})\oplus
(\oplus^{a}_{i=1}M^{(t-t_{i})}_{\mathfrak{m}_{i}})
\rightarrow \oplus^{a}_{i=1}M^{(t)}_{\mathfrak{m}_{i}}\rightarrow0.$$
Since~$\bigoplus^{a}_{i=1}N_{\mathfrak{m}_{i}}\cong N,~
\bigoplus^{a}_{i=1}M_{\mathfrak{m}_{i}}\cong M$.~Hence~$N\in[M]_{1}$.

Suppose the statement holds for $n-1$ and suppose $N_{\mathfrak{m}_{i}} \in [M_{\mathfrak{m}_{i}}]_n$ for $1\leq i\leq a$, where $\mathfrak{m}_{1},\cdots,
\mathfrak{m}_{a}$ are all maximal ideals of $R$. Then there exists an exact sequence in $R_{\mathfrak{m}_{i}}\text{-}\mathrm{mod}$
\begin{align}\label{eq-local-ex-dim5.1}
0 \rightarrow A^{\mathfrak{m}_{i}}\rightarrow N_{\mathfrak{m}_{i}}\oplus Z^{\mathfrak{m}_{i}}\rightarrow B^{\mathfrak{m}_{i}} \rightarrow 0,
\end{align}
such that $A^{\mathfrak{m}_{i}}\in
[M_{\mathfrak{m}_{i}}]_{n-1}$ and $B^{\mathfrak{m}_{i}}
\in [M_{\mathfrak{m}_{i}}]_1$.
From~(\ref{eq-local-ex-dim5.1}),~we obtain an exact sequence
\begin{align}\label{Eq5.3}
0\rightarrow \oplus^{a}_{i=1}A^{\mathfrak{m}_{i}}
\rightarrow N\oplus(\bigoplus^{a}_{i=1}Z^{\mathfrak{m}_{i}})\rightarrow
\oplus^{a}_{i=1}B^{\mathfrak{m}_{i}} \rightarrow 0
\end{align}
in $R\text{-}\mathrm{mod}$,~and satisfying
\begin{align*}
&\oplus^{a}_{i=1}A^{\mathfrak{m}_{i}}
\in[\oplus^{a}_{i=1}M^{\mathfrak{m}_{i}}]_{n-1}=[M]_{n-1},\\
&\oplus^{a}_{i=1}B^{\mathfrak{m}_{i}}
\in[\oplus{^{a}_{i=1}}M^{\mathfrak{m}_{i}}]_{1}=[M]_{1}.
\end{align*}
Hence~$N\in[M]_{n}$ by (\ref{Eq5.3}).
\end{proof}

\begin{thm}\label{th:4.2}
For any $j,n\in\mathbb{N}$,~the following statements~are equivalent:\\
$(1)$~$\mathrm{ext.dim}(\Omega^{j}(R\text{-}\mathrm{mod}))\leq n$;\\
$(2)$~$\mathrm{ext.dim}(\Omega^{j}(R_{\mathfrak{p}}\text{-}\mathrm{mod}))\leq n$ for every $\mathfrak{p}\in\mathrm{Spec}(R)$;\\
$(3)$~$\mathrm{ext.dim}(\Omega^{j}(R_{\mathfrak{m}}\text{-}\mathrm{mod}))\leq n$ for every $\mathfrak{m}\in\mathrm{Max}(R)$.
\end{thm}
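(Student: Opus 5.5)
The plan is to reduce the statement to the product decomposition of Lemma~\ref{Artin-4.1} and then apply Lemma~\ref{lem:syzygy-equivalence} together with Lemma~\ref{lemma-exa-5.6}.

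Since $R$ is Artin, every prime ideal is maximal, so $\mathrm{Spec}(R)=\mathrm{Max}(R)$. Hence $(2)\Leftrightarrow(3)$ is immediate, and it suffices to prove $(1)\Leftrightarrow(3)$.

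By Lemma~\ref{Artin-4.1}, $R\cong\prod_{i=1}^{a}R_{\mathfrak{m}_i}$, where $\mathfrak{m}_1,\dots,\mathfrak{m}_a$ are all the maximal ideals. Iterating Lemma~\ref{lem:syzygy-equivalence}, stated for two factors but extending by induction on $a$ since $\prod_{i=1}^{a}R_{\mathfrak{m}_i}\cong R_{\mathfrak{m}_1}\times\prod_{i\ge2}R_{\mathfrak{m}_i}$, gives an equivalence
\[
\Omega^{j}(R\text{-}\mathrm{mod})\simeq\Omega^{j}(R_{\mathfrak{m}_1}\text{-}\mathrm{mod})\times\cdots\times\Omega^{j}(R_{\mathfrak{m}_a}\text{-}\mathrm{mod}).
\]
This equivalence is given by $X\mapsto(e_iX)_i=(X_{\mathfrak{m}_i})_i$, with quasi-inverse the direct sum. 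Both functors are exact and preserve direct sums and summands. Consequently they carry $[N]_t$ onto $[(N_{\mathfrak{m}_i})_i]_t$ and back, by Lemma~\ref{lem:2.4} applied in both directions. Therefore the extension dimension is invariant under this equivalence, and Lemma~\ref{lemma-exa-5.6}(2) yields
\[
\mathrm{ext.dim}(\Omega^{j}(R\text{-}\mathrm{mod}))=\max_{1\le i\le a}\mathrm{ext.dim}(\Omega^{j}(R_{\mathfrak{m}_i}\text{-}\mathrm{mod})),
\]
from which $(1)\Leftrightarrow(3)$ follows at once.

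A more hands-on alternative uses Lemma~\ref{lem-local-4.1}. For $(1)\Rightarrow(3)$, let $\Omega^{j}(R\text{-}\mathrm{mod})\subseteq[T]_{n+1}$. Any $Y\in\Omega^{j}(R_{\mathfrak{m}}\text{-}\mathrm{mod})$, viewed as an $R$-module through $R\to R_{\mathfrak{m}}$ (a direct factor, so projectives restrict to projectives), lies in $\Omega^{j}(R\text{-}\mathrm{mod})$. Then $Y\cong Y_{\mathfrak{m}}\in[T_{\mathfrak{m}}]_{n+1}$ by Lemma~\ref{lem:2.4}. For $(3)\Rightarrow(1)$, choose generators $T^{(i)}$ for each factor and set $T:=\bigoplus_i T^{(i)}$. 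For $X\in\Omega^{j}(R\text{-}\mathrm{mod})$, each $X_{\mathfrak{m}_i}$ is a $j$-th syzygy over $R_{\mathfrak{m}_i}$, because localization is exact and preserves projectives. Hence $X_{\mathfrak{m}_i}\in[T^{(i)}]_{n+1}=[T_{\mathfrak{m}_i}]_{n+1}$, and Lemma~\ref{lem-local-4.1} gives $X\in[T]_{n+1}$.

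The main obstacle is the bookkeeping in the hands-on route: checking that $T_{\mathfrak{m}_i}\cong T^{(i)}$, i.e.\ that the other summands localize to zero, which holds because $e_iT^{(k)}=0$ for $k\ne i$. One must also check that syzygy classes are compatible with restriction along the projection $R\to R_{\mathfrak{m}_i}$ and with localization. Both reduce to the fact that each $R_{\mathfrak{m}_i}$ is a ring direct factor of $R$.
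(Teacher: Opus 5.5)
Your proposal is correct, and your hands-on route is essentially the paper's own proof.

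\begin{itemize}
\item For $(1)\Rightarrow(2)$, the paper also uses that $R_{\mathfrak p}$ is a ring direct factor of $R$. So any $Y\in\Omega^{j}(R_{\mathfrak p}\text{-}\mathrm{mod})$ lies in $\Omega^{j}(R\text{-}\mathrm{mod})$, and $Y\cong Y_{\mathfrak p}\in[M_{\mathfrak p}]_{n+1}$. The paper phrases this as a somewhat roundabout contradiction argument; yours is direct.
\item For $(3)\Rightarrow(1)$, the paper takes $\bigoplus_i M^{\mathfrak m_i}$ as the global generator. It checks that the other summands localize to zero and then invokes Lemma~\ref{lem-local-4.1}, exactly as you do.
\end{itemize}

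Your main route is genuinely different. Instead of the object-level local-global criterion of Lemma~\ref{lem-local-4.1}, you transport the problem through the equivalence $\Omega^{j}(R\text{-}\mathrm{mod})\simeq\prod_i\Omega^{j}(R_{\mathfrak m_i}\text{-}\mathrm{mod})$ and apply Lemma~\ref{lemma-exa-5.6}(2). This gives the sharper equality $\mathrm{ext.dim}(\Omega^{j}(R\text{-}\mathrm{mod}))=\max_i\mathrm{ext.dim}(\Omega^{j}(R_{\mathfrak m_i}\text{-}\mathrm{mod}))$, which the paper only records afterwards in Example~\ref{Exa-local-5.8} for two factors. The theorem then becomes a purely formal consequence.

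The cost is two small extensions that you should state explicitly, though both follow verbatim from the existing proofs:
\begin{itemize}
\item Lemma~\ref{lemma-exa-5.6} is formulated for subcategories of a single $R\text{-}\mathrm{mod}$, whereas your factors live in different module categories.
\item You need $\mathrm{ext.dim}$ to be invariant under an exact equivalence of the ambient abelian categories. Your two-sided use of Lemma~\ref{lem:2.4} handles this.
\end{itemize}

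The paper's route stays closer to the language of localization functors and local-global principles. However, for Artin rings Lemma~\ref{lem-local-4.1} itself rests on the same decomposition $R\cong\prod_i R_{\mathfrak m_i}$, so the two arguments have the same underlying content.
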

\begin{proof}
$(1)\Rightarrow(2)$ Assume~$\mathrm{ext.dim}(\Omega^{j}(R\text{-}\mathrm{mod}))\leq n$.~Then there exists~$M\in R\text{-}\mathrm{mod}$ such that
$$a:=\mathrm{inf}\{t\geq0\mid\Omega^{j}(R\text{-}\mathrm{mod})\subseteq[M]_{t+1}\}\leq n.$$
Since $R_{\mathfrak{p}}\otimes_{R}-$ is an exact functor for every $\mathfrak{p}\in\mathrm{Spec}(R)$,~by Lemma \ref{lem:2.4},
$$R_{\mathfrak{p}}\otimes_{R}\Omega^{j}(R\text{-}\mathrm{mod})\subseteq R_{\mathfrak{p}}\otimes_{R}([M]_{a+1})\subseteq[M_{\mathfrak{p}}]_{a+1}.$$
Set $b:=\mathrm{inf}\{t\geq0\mid
R_{\mathfrak{p}}\otimes_{R}\Omega^{j}(R\text{-}\mathrm{mod})
\subseteq[M_{\mathfrak{p}}]_{t+1}\}$.~Obviously,~$b\leq a$.~Moreover,~by Lemma~\ref{lem3.1},~we obtain
\begin{align}\label{Eq-Lem4-8}
R_{\mathfrak{p}}\otimes_{R}\Omega^{j}(R\text{-}\mathrm{mod})
\subseteq\Omega^{j}(R_{\mathfrak{p}}\text{-}\mathrm{mod}).
\end{align}
Set~$c:=
\mathrm{inf}\{t\geq0\mid\Omega^{j}(R_{\mathfrak{p}}\text{-}\mathrm{mod})
\subseteq[M_{\mathfrak{p}}]_{t+1}\}$.~Then,~by~(\ref{Eq-Lem4-8}), one has $b\leq c$.~Suppose $b<c$.~Then there exists $Y\in
\Omega^{j}(R_{\mathfrak{p}}\text{-}\mathrm{mod})$ such that
$$R_{\mathfrak{p}}\otimes_{R}Y\cong Y\notin[M_{\mathfrak{p}}]_{b+1}.$$
Moreover,~it follows from $R$ is an Artin ring that all prime ideals are maximal, and by Lemma~\ref{Artin-4.1} we know $R_{\mathfrak{p}}$ is a direct summand of $R$ for any $\mathfrak{p}\in\mathrm{Spec}(R)$.~This implies that $R_{\mathfrak{p}}$ is a finitely generated and projective $R$-module, thus $_{R}Y\in
\Omega^{j}(R\text{-}\mathrm{mod})$ by Lemma~\ref{lem3.1},
this contradicts the definition of $b$.~Hence,~$c=b\leq a\leq n$,~and $\mathrm{ext.dim}
(\Omega^{j}(R_{\mathfrak{p}}\text{-}\mathrm{mod}))\leq n$ for any $\mathfrak{p}\in\mathrm{Spec}(R)$.

$(2)\Rightarrow(3)$ Obviously.

$(3)\Rightarrow(1)$~Assume~$t_{\mathfrak{m}}:=
\mathrm{ext.dim}(\Omega^{j}(R_{\mathfrak{m}}\text{-}\mathrm{mod}))\leq n$ for any $\mathfrak{m}\in\mathrm{Max}(R)$.~Then there exists $M^{\mathfrak{m}}\in
R_{\mathfrak{m}}\text{-}\mathrm{mod}$ such that
$$t_{\mathfrak{m}}=\mathrm{inf}\{i\geq0\mid\Omega^{j}(R_{\mathfrak{m}}\text{-}\mathrm{mod})
\subseteq[M^{\mathfrak{m}}]_{i+1}\}.$$
Since $R_{\mathfrak{m}}\otimes_{R}-$ is an exact functor and preserves projectives,~$R_{\mathfrak{m}}\otimes_{R}\Omega^{j}(R\text{-}\mathrm{mod})
\subseteq[M^{\mathfrak{m}}]_{t+1}$~by Lemma~\ref{lem3.1},
where $t=\mathrm{max}\{t_{\mathfrak{m}}\mid \mathfrak{m}\in
\mathrm{Max}(R)\}$.~Note
\begin{align*}
R_{\mathfrak{m}}\otimes_{R}M^{\mathfrak{n}}
\cong R_{\mathfrak{m}}\otimes_{R}(R_{\mathfrak{n}}
\otimes_{R}M^{\mathfrak{n}})
\cong\begin{cases}
M^{\mathfrak{m}}, &\mathfrak{n}=\mathfrak{m},\\[2mm]
0, &\mathfrak{n}\neq\mathfrak{m},
\end{cases}
\end{align*}
hold for any $\mathfrak{m},\mathfrak{n}\in\mathrm{Max}(R)$.~Therefore
$$M^{\mathfrak{m}}\cong R_{\mathfrak{m}}\otimes_{R}M^{\mathfrak{m}}
\cong R_{\mathfrak{m}}\otimes_{R}(\bigoplus^{a}_{i=1}M^{\mathfrak{m}_{i}})$$
for some $M^{\mathfrak{m}_{i}}\in R_{\mathfrak{m}_{i}}\text{-}\mathrm{mod}$,
where $\mathfrak{m}_{1},\cdots,\mathfrak{m}_{a}$ are all maximal ideals of $R$.~Hence,
$$R_{\mathfrak{m}}\otimes_{R}\Omega^{j}(R\text{-}\mathrm{mod})\subseteq
[R_{\mathfrak{m}}\otimes_{R}(\bigoplus^{a}_{i=1}M^{\mathfrak{m}_{i}})]_{t+1}$$
for any $\mathfrak{m}$.~And then Lemma~\ref{lem-local-4.1} yields $\Omega^{j}(R\text{-}\mathrm{mod})
\subseteq[\bigoplus^{a}_{i=1}M^{\mathfrak{m}_{i}}]_{t+1}$.~Thus
$\mathrm{ext.dim}(\Omega^{j}(R\text{-}\mathrm{mod}))\leq t.$
\end{proof}

\begin{cor}\label{cor4.5}
{\rm It follows~by Theorem~\ref{th:4.2} and Corollary~\ref{prop:3.6} that~$R$ is syzygy finite if and only if $R_{\mathfrak{p}}$ is syzygy finite for all prime ideals~$\mathfrak{p}$.
}
\end{cor}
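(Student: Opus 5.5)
The statement to prove is Corollary~\ref{cor4.5}: a commutative Artin ring $R$ is syzygy finite if and only if every localization $R_{\mathfrak{p}}$ is. I will read ``syzygy finite'' as ``$n$-syzygy finite'' for a fixed but arbitrary $n\in\mathbb{N}$. The plan is to translate syzygy finiteness into the vanishing of an extension dimension and then apply Theorem~\ref{th:4.2} with the bound $0$.

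\textbf{Step 1 (the translation).} As in the proof of Corollary~\ref{prop:3.6}, I will use that for any Artin algebra $A$,
$$A \text{ is } n\text{-syzygy finite} \iff \mathrm{ext.dim}(\Omega^{n}(A\text{-}\mathrm{mod}))=0.$$
I will verify both directions, since this is the only point needing care.
\begin{itemize}
\item[$(\Rightarrow)$] Let $N$ be the direct sum of representatives of the finitely many indecomposables in $\Omega^{n}(A\text{-}\mathrm{mod})$. By Krull--Schmidt, every object of $\Omega^{n}(A\text{-}\mathrm{mod})$ is a finite direct sum of these indecomposables, so it lies in $[N]_{1}=\mathrm{add}(N)$.
\item[$(\Leftarrow)$] If $\Omega^{n}(A\text{-}\mathrm{mod})\subseteq[N]_{1}$, then every indecomposable object of $\Omega^{n}(A\text{-}\mathrm{mod})$ is a direct summand of some $N^{(k)}$. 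By Krull--Schmidt it is therefore isomorphic to one of the finitely many indecomposable summands of $N$.
\end{itemize}

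\textbf{Step 2 (applying the local-global theorem).} Each $R_{\mathfrak{p}}$ is a commutative Artin ring, hence an Artin algebra over itself, so Step~1 applies to $R$ and to every $R_{\mathfrak{p}}$. Taking the bound $0$ and $j=n$ in Theorem~\ref{th:4.2}, the equivalence $(1)\Leftrightarrow(2)$ there reads
$$\mathrm{ext.dim}(\Omega^{n}(R\text{-}\mathrm{mod}))\leq 0 \iff \mathrm{ext.dim}(\Omega^{n}(R_{\mathfrak{p}}\text{-}\mathrm{mod}))\leq 0 \ \text{ for all } \mathfrak{p}\in\mathrm{Spec}(R).$$
Combining this with Step~1 on both sides gives the corollary. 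The same argument with $(1)\Leftrightarrow(3)$ shows it suffices to check maximal ideals, and for an Artin ring these coincide with the primes anyway.

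\textbf{Main obstacle.} There is no real difficulty beyond Step~1, which needs the Krull--Schmidt property of $A\text{-}\mathrm{mod}$. One should also note that $\Omega^{n}(A\text{-}\mathrm{mod})$ is closed under direct summands only up to projective summands. This does not affect the argument: the translation only uses that the objects of $\Omega^{n}(A\text{-}\mathrm{mod})$ lie in $\mathrm{add}(N)$ and have finitely many indecomposable isoclasses.
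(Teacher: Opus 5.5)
Your route is the paper's own: you convert $n$-syzygy finiteness into $\mathrm{ext.dim}(\Omega^{n}(A\text{-}\mathrm{mod}))=0$, which is the observation in the proof of Corollary~\ref{prop:3.6}, and then apply Theorem~\ref{th:4.2} with bound $0$. Step~2 and the $(\Leftarrow)$ half of Step~1 are fine. The problem is the $(\Rightarrow)$ half of Step~1, together with your closing remark.

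Writing every object of $\Omega^{n}(A\text{-}\mathrm{mod})$ as a sum of ``the finitely many indecomposables in $\Omega^{n}(A\text{-}\mathrm{mod})$'' presupposes that indecomposable direct summands of $n$-th syzygies are again $n$-th syzygies. For $n\geq 2$ this fails, even up to projective summands. Take $A=k[x,y]/(x,y)^{2}$ with $k$ a field, and let $S=A/\mathrm{rad}A$.
\begin{itemize}
\item The second syzygies are exactly the modules $\mathrm{Hom}_{A}(V,A)$ with $V\in A\text{-}\mathrm{mod}$. Indeed, a kernel of a map $d$ between projectives is the dual of the cokernel of $\mathrm{Hom}_{A}(d,A)$, and dualizing a presentation of $V$ gives the converse.
\item When $V$ has no projective summand, every map $V\to A$ lands in $\mathrm{rad}A\cong S^{2}$. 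Hence $\mathrm{Hom}_{A}(V,A)\cong S^{2t}$, where $t$ is the length of $\mathrm{top}(V)$.
\item So $\Omega^{2}(A\text{-}\mathrm{mod})$ consists of the modules $S^{2t}\oplus A^{m}$. It contains $S\oplus S=\Omega^{2}(A/xA)$, it contains no $S\oplus P$ with $P$ projective, and its only indecomposable object is $A$.
\end{itemize}
Under the literal reading you adopted, your $N$ is therefore $A$, while $S\oplus S\notin\mathrm{add}(A)$. So the Krull--Schmidt step is false as stated. Your assertion that non-closure ``does not affect the argument'' simply assumes what Step~1 was meant to prove.

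The repair is to use the standard notion of $n$-syzygy finiteness: only finitely many isoclasses of indecomposable modules occur as direct summands of modules in $\Omega^{n}(A\text{-}\mathrm{mod})$. Equivalently, $\Omega^{n}(A\text{-}\mathrm{mod})\subseteq\mathrm{add}(N)=[N]_{1}$ for some $N$. With that reading, both directions of Step~1 follow at once from Krull--Schmidt. Your proof then coincides with the paper's, whose ``Obviously'' in the proof of Corollary~\ref{prop:3.6} tacitly relies on the same reading.
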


Suppose that $R$ and $S$ are commutative Artin rings. It is well known that the
ideals of $R\times S$ have the form $I\times J$ where $I$ is an ideal of $R$ and $J$ is an ideal of $S$.~Moreover,~it follows by \cite{AK2008} that the prime (resp. maximal) ideals of~$R\times S$ have the form $\mathfrak{p}\times S$ or~$R\times
\mathfrak{q}$ where $\mathfrak{p}$ is a prime (resp. maximal) ideal of $R$ or $\mathfrak{q}$ is a prime (resp. maximal) ideal of $S$.~Next,~we recall a lemma of localization of $R\times S$,~which is need for Example~\ref{Exa-local-5.8}.

\begin{lem}\label{lem:product-localization}
{\rm(\cite[Exercises~2.32]{Singh2011})}~Let $A$ and $B$ be commutative rings. Then, for every ideals~$\mathfrak p\in\operatorname{Spec}(A)$ and
$\mathfrak q\in\operatorname{Spec}(B)$, one has
\[
(A\times B)_{\mathfrak p\times B}\cong A_{\mathfrak p},~(A\times B)_{A\times\mathfrak q}\cong B_{\mathfrak q}.
\]
\end{lem}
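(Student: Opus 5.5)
The statement is the cited localization lemma: for commutative rings $A,B$ and primes $\mathfrak p\in\operatorname{Spec}(A)$, $\mathfrak q\in\operatorname{Spec}(B)$, we want $(A\times B)_{\mathfrak p\times B}\cong A_{\mathfrak p}$ and, symmetrically, $(A\times B)_{A\times\mathfrak q}\cong B_{\mathfrak q}$. I would prove only the first isomorphism; the second follows by interchanging the roles of $A$ and $B$. The plan is to use the universal property of localization, with the central idempotent $e=(1,0)$ carrying the argument.

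First, check that $\mathfrak P:=\mathfrak p\times B$ is prime in $A\times B$. It is the kernel of the composite $A\times B\to A\to A/\mathfrak p$, and $A/\mathfrak p$ is a domain. The complement of $\mathfrak P$ is
$$U:=(A\times B)\setminus\mathfrak P=\{(s,b)\mid s\notin\mathfrak p,\ b\in B\}.$$
Next, consider the ring map $\varphi:A\times B\to A\to A_{\mathfrak p}$, $(a,b)\mapsto a/1$. Every $(s,b)\in U$ is sent to the unit $s/1$, so $\varphi$ factors uniquely through a homomorphism
$$\Phi:(A\times B)_{\mathfrak P}\to A_{\mathfrak p},\qquad (a,b)/(s,b')\mapsto a/s.$$

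To see that $\Phi$ is bijective, the key observation is that $e=(1,0)\in U$ while $(0,1)\cdot e=0$. Hence $(0,1)/1=0$ in $(A\times B)_{\mathfrak P}$, so every fraction can be written with zero $B$-components: $(a,b)/(s,b')=(a,0)/(s,0)$. Surjectivity is then immediate, since $a/s=\Phi\big((a,0)/(s,0)\big)$. For injectivity, suppose $a/s=0$ in $A_{\mathfrak p}$. Then $ua=0$ for some $u\notin\mathfrak p$, and $(u,0)\in U$ satisfies $(u,0)(a,0)=0$, so $(a,0)/(s,0)=0$.

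The only point needing care is this reduction to zero $B$-components. It rests entirely on $e\in U$ annihilating $(0,1)$, so I will verify that membership explicitly: $1\notin\mathfrak p$, hence $(1,0)\notin\mathfrak p\times B$. Beyond that, the argument is routine.
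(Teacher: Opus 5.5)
Your proof is correct and complete. The ideal $\mathfrak p\times B$ is prime because it is the kernel of $A\times B\to A/\mathfrak p$. The map $\Phi$ exists by the universal property. Since $e=(1,0)$ lies in $U$, one has $x/u=(xe)/(ue)$ in $(A\times B)_{\mathfrak p\times B}$, so every element has the form $(a,0)/(s,0)$. Surjectivity and injectivity then follow exactly as you wrote: for injectivity, $a/s=0$ gives some $u\notin\mathfrak p$ with $ua=0$, and $(u,0)\in U$ kills $(a,0)$.

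The paper itself gives no argument here. It only quotes Exercise~2.32 of Singh's book and uses the lemma only in Example~\ref{Exa-local-5.8}. So your proposal supplies a self-contained verification rather than reproducing or departing from a proof in the text.

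An alternative, slightly more general route is as follows. Take any multiplicative set $U\subseteq A\times B$, and let $U_A\subseteq A$ and $U_B\subseteq B$ be its images under the projections. Then $(a,b)/(s,t)\mapsto(a/s,b/t)$ is a ring isomorphism $U^{-1}(A\times B)\cong U_A^{-1}A\times U_B^{-1}B$. For $U=(A\times B)\setminus(\mathfrak p\times B)$ one has $U_A=A\setminus\mathfrak p$ and $0\in U_B=B$, so the second factor is the zero ring.

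Your idempotent argument is the special case of this in which the vanishing of the $B$-factor is witnessed directly by $(1,0)\in U$ annihilating $(0,1)$. It is shorter and fully sufficient for the paper's use. The general formula additionally handles multiplicative sets that contain no such idempotent.
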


\begin{exa}\label{Exa-local-5.8}
{\rm Let $k$ be a field, and it is known that $R_{1}:=k[x]/(x^{2})$ and $R_{2}:=k[x,y]/(x^{2},y^{2},xy)$ are local Artin $k$-algebras.~Set $R:=\prod^{2}_{i=1}R_{i}$,~$\mathfrak{m}_{i}\in\mathrm{Max}(R_{i})$.~Then, $R$ has only two maximal ideals~$\mathfrak{M}_{1}$ and~$\mathfrak{M}_{2}$, and it follows by~Lemma~\ref{lem:product-localization} that $R_{\mathfrak{M}_{1}}\cong R_{1},~R_{\mathfrak{M}_{2}}\cong R_{2}$.~Moreover,~by Lemma~\ref{lem:syzygy-equivalence},~there is an equivalence of categories~$\Omega^{j}(R\text{-}\mathrm{mod})\simeq\Omega^{j}(R_{1}\text{-}\mathrm{mod})\times \Omega^{j}(R_{2}\text{-}\mathrm{mod})$.
And by Lemma \ref{lemma-exa-5.6},~we obtain
$$\mathrm{ext.dim}(\Omega^{j}(R\text{-}\mathrm{mod}))=\mathrm{max}\{
\mathrm{ext.dim}(\Omega^{j}(R_{1}\text{-}\mathrm{mod})),
\mathrm{ext.dim}(\Omega^{j}(R_{2}\text{-}\mathrm{mod}))\}.$$
Note that~$\mathrm{ext.dim}(R_{1})=0$ since $R_{1}$ is representation-finite type.~Hence,~if $j=0$,~then
$$\mathrm{ext.dim}(R)=\mathrm{max}\{
\mathrm{ext.dim}(R_{1}),\mathrm{ext.dim}(R_{2})\}\leq\ell\ell(R_{2})-1.$$
}
\end{exa}

\bigskip {\bf Acknowledgement.}
This work was partially supported by the National Natural Science Foundation of China (Grant No. 11261050).

\bigskip {\bf Conflict of interest}
The authors declare there is no conflicts of interest.

\def\Up{\mathrm{U}}
\def\Down{\mathrm{D}}
\def\e{\varepsilon}
\def\modcat{\mathrm{mod}\text{-}}
\def\I{\mathrm{I}}
\def\II{\mathrm{II}}
\def\rad{\mathrm{rad}}
\def\soc{\mathrm{soc}}
\def\lineardim{\mathrm{dim}_{k}}

\bigskip

\noindent\textbf{Pei Luo} \\
College of Mathematics and Statistics, Northwest Normal University, Lanzhou 730070, P. R. China \\
E-mail: \textsf{lp2021978572@163.com}\\

\noindent\textbf{Zhongkui Liu} \\
College of Mathematics and Statistics, Northwest Normal University, Lanzhou 730070, P. R. China \\
E-mail: \textsf{liuzk@nwnu.edu.cn}

\end{document}